\newtheorem{thm}{Theorem}[section]
\newtheorem{lem}[thm]{Lemma}
\newtheorem{rem}[thm]{Remark}}
\newcommand{\ra}{\rightarrow}
\newcommand{\dis}{\displaystyle}
\def\R{\mathbb R}
\def\d{\text{\rm{d}}}
\def\E{\mathbb E}
\def\p{\mathbb P}\def\e{\text{\rm{e}}}
\def\la{\langle}
\def\raa{\rangle}
\def\La{\Lambda}
\def\veps{\varepsilon}
\def\de{\delta}
\def\rsdp{\textbf{RSDP}}
\def\S{\mathcal S}
\def\vsig{\varsigma}
\newcommand{\fin}{\hspace*{\fill}\rule{0.3em}{1ex}}
\newenvironment{proof}{{\bf \noindent Proof.}}{\fin}
\numberwithin{equation}{section}
\begin{document}

\title{Invariant measures and Euler-Maruyama's approximations of state-dependent regime-switching diffusions\footnote{Supported in
 part by    NNSFs of China No. 11431014, 11771327}}

\author{Jinghai Shao\footnote{Email: shaojh@bnu.edu.cn}\\[0.2cm]
Center for Applied Mathematics, Tianjin
University, Tianjin 300072, China}
\maketitle

\begin{abstract}
  Regime-switching processes contain two components: continuous component and discrete component, which can be used to describe a continuous dynamical system in a random environment. Such processes have many different properties than general diffusion processes, and much more difficulties are needed to be overcome due to the intensive interaction between continuous and discrete component. We give conditions for the existence and uniqueness of invariant measures for state-dependent regime-switching diffusion processes by constructing a new Markov chain to control the evolution of the state-dependent switching process. We also establish the strong convergence in the $L^1$-norm of the Euler-Maruyama's approximation and estimate the order of error. A refined application of Skorokhod's representation of jumping processes plays a substantial role in this work.
\end{abstract}

\noindent AMS subject Classification:\ 60J60, 65C30, 60H30

\noindent\textbf{Key words}: Regime-switching, State-dependent, Euler-Maruyama's approximation, Successful coupling

\section{Introduction}

The regime-switching diffusion processes have drawn much attention owing to the demand of modeling, analysis and computation of complex dynamical systems. Classical models using deterministic differential equations and stochastic differential equations alone are often inadequate, and many models having considered the random switching of the environment are extensively proposed and investigated in control engineering, queueing networks, filtering of dynamic systems, ecological and biological systems, mathematical finance etc. recently. This kind of process has been studied by Skorokhod \cite{Sk}, where it was called a process with a discrete component to emphasize the difference caused by the application of discrete topology for some component of the investigated process. Precisely, the regime-switching diffusion process (\textbf{RSDP}) concerned in this work have two components $(X(t),\La(t))$. $(X(t))$ is used to describe the continuous dynamical system satisfying the following stochastic differential equation (SDE):
\begin{equation}\label{1.1}
\d X(t)=b(X(t), \La(t))\d t+\sigma(X(t),\La(t))\d W(t),\quad t>0, X_0=x\in \R^n,\ \La(0)=i\in\S,
\end{equation}
where $b:\R^n\times\S\ra \R^n$, $\sigma:\R^n\times\S\ra \R^n\otimes\R^n$, and $\S=\{1,2,\ldots,N\}$ with $N<\infty$.  $(\La(t))$ is used to describe the switching of regimes or the change of environment in which $(X(t))$ lives. $(\La(t))$ is a jumping process on $\S$ with the transition rate satisfying
\begin{equation} \label{1.2}
  \p(\La(t+\Delta)=j|\La(t)=i,\ X(t)=x)=\begin{cases}
    q_{ij}(x)\Delta+o(\Delta), &j\neq i,\\
    1+q_{ii}(x)\Delta+o(\Delta),&j=i
  \end{cases}
\end{equation} provided $\Delta\downarrow 0$. When $q_{ij}(x)$ is independent of $x$ for all $i,\,j\in \S$, $(X(t),\La(t))$ is called a state-independent \textbf{RSDP} or a \textbf{RSDP} with Markovian switching. Otherwise, it is called a state-dependent \textbf{RSDP}.

Although the \textbf{RSDP}s are seemingly similar to the well-known diffusion processes with time-dependent coefficients, their properties are quite different from those of the usual diffusion processes. Compared with the diffusion process in a fixed environment, the \textbf{RSDP} owns much more complicated behavior. The random switching of the environment has essential impact on the properties of this system, for example, the properties of recurrence, stability, and tail behavior of the stationary distribution. Pinsky and Scheutzow in \cite{PS} constructed two examples on the half line, which showed that even if the \textbf{RSDP} in every fixed environment is recurrent (or transient), this process itself could be transient (or recurrent respectively) under certain random switching rate of the environment.  Similar phenomenon appears in the study of stability of the \textbf{RSDP}, and we refer to the works \cite{BBG1996, BBG1999,FC, KZY2007,SX}  and references therein for the study of stability of the \rsdp. The monographs \cite{MY} and \cite{YZ} provide good summaries of the recent progress in the study of  state-independent and state-dependent \rsdp s\, respectively.
As shown in \cite{SY}, \cite{Bar} for the Ornstein-Uhlenbeck process with Markovian switching, and in \cite{HS17} for the Cox-Ingeroll-Ross process with Markovian switching, the stationary distributions of the corresponding processes with switching could be heavy-tailed, but the stationary distributions of the processes without switching must be light-tailed. Therefore, the heavy-tailed empirical evidence promotes the application of models with regime-switching.

The recurrent property of \rsdp\, has been extensively investigated; see, for example, \cite{BL,CH,PP,SX,Sh15a,Sh15b} for the setting of state-independent switching processes, \cite{BL,CH,Sh15a} for the setup of bounded
state-dependent switching processes, \cite{MT} for the framework of
unbounded and state-dependent switching processes. So far, there are several
approaches to explore ergodicity for \rsdp s; see, for
instance, \cite{BL,Sh15a} via probabilistic coupling argument,
\cite{CH,MT} by weak Harris' theorem, \cite{PP,Sh15a,Sh15b} based on
the theory of M-matrix, Perron-Frobenius theorem and the Fredholm alternative. In particular, to study the
ergodicity and stability of \rsdp\, with infinitely countable regimes, we have proposed two methods in
\cite{SX,Sh15a,Sh15b}, i.e. finite partition method based on the M-matrix theory and the principal eigenvalue of bilinear forms method.

Recently, previously introduced \rsdp s have been extended in two directions: one is to extend SDEs driven by Brownian motion to those driven by general L\'evy processes (e.g. \cite{Xi, YX2010, XZ17}); anther is to extend SDEs to functional SDEs (e.g. \cite{M13, Sh17a, BSY17}) or the discrete switching process depending on the past of the continuous process in order to deal with the past dependence of the system in practice (e.g. \cite{NY}).

The purpose of current work is to study the existence of invariant measures and Euler-Maruyama's approximation of state-dependent \rsdp. For  \rsdp s with Markovian switching, these two problems have relatively been well studied. See, for instance, \cite{CH,Sh15a,BSY} for existence of invariant measures, \cite{YM04,MYY} for the numerical approximation of state-independent \rsdp\,under Lipschitz and non-Lipschitz conditions. However, these two problems for the state-dependent \rsdp s are not well studied. In \cite{YZ}, some types of Foster-Lyapunov conditions were given on the recurrence of state-dependent \rsdp s by viewing $(X(t),\La(t))$ as a special kind of jump-diffusions.
But, it is very hard to find suitable Lyapunov functions for state-dependent \rsdp s. In \cite{Sh15a}, we simplified the transition rate matrices of $(\La(t))$ by introducing a new transition rate matrix and its associated Markov chain, then used the M-matrix theory to give out a criterion on the recurrence of $(X(t),\La(t))$. The regime-switching systems are rather complicated, and it is usually impossible to get explicit solutions of such systems. So the numerical approximation is an important alternative of such systems. However, there was few work besides \cite{YMYC} on the numerical approximation of state-dependent \rsdp s due to the close interaction between the continuous component and the discrete component. In \cite{YMYC}, the weak convergence of numerical approximation was established by constructing a sequence of discrete-time Markov chains. This method is different to the usual time-discretizing Euler-Maruyama's approximation, and is difficult to obtain the order of error.
The main difficulty is that the evolution of $(\La(t))$ is much more complicated due to its dependence on the continuous-state process $(X(t))$, which makes the transition rate matrices of $(\La(t))$ are different for every step of jumps. Much care and more techniques need to be exercised to handle the mixture of $(X(t))$ and $(\La(t))$. In this work, we aim to establish the strong convergence of the time-discretizing Euler-Maruyama's approximation and estimate its order of error.

In this work, the existence and uniqueness of invariant measure for $(X(t),\La(t))$ is established by the convergence of the distribution of $(X(t),\La(t))$ in the Wasserstein distance as in \cite{CH} and \cite{Sh15b}. We construct the coupling by reflection of $(X(t),\La(t))$, and provide explicit conditions to guarantee this coupling to be successful. This result also weakens the conditions imposed in \cite{XS} on the successful coupling of state-dependent \rsdp s. Here, we show that the coupling is successful if the corresponding coupling process in at least one fixed environment is successful uniformly relative to the initial points. In \cite{XS}, it needs that  the corresponding coupling processes in every fixed environment is successful uniformly with respect to the initial points. An important technique in this procedure is the construction of an auxiliary Markov chain to control the evolution of the state-dependent jumping process $(\La(t))$ based on its Skorokhod's representation (see Lemma \ref{t-1.1}  and Lemma \ref{t-2.1} below).


Let $(X(t),\La(t))$ be the solution of \eqref{1.1} and \eqref{1.2} with additive noise, i.e. $\sigma(x,i)\equiv \sigma\in \R^{n\times n}$. In present work, we consider the following Euler-Maruyama's approximation of $(X(t),\La(t))$: for $\delta\in (0,1)$, define
\begin{align*}
  \d X^{\delta}(t)&=b(X^\delta(t),\La^\delta(t))\d t+\sigma \d W(t),\\
  \d \La^\delta(t)&=\int_{[0,M]}h(X^\delta(t),\La^\delta(t-),z)N_1(\d t,\d z),
\end{align*} with $(X^\delta(0),\La^\delta(0))=(X(0),\La(0))$, where $t_\delta=[t/\delta]\delta$, and $[t/\delta]$ denotes the integer part of $t/\delta$. Under some hypotheses, we show in Theorem \ref{t-3.3} that there exists some constant $C>0$ such that for $T>0$,
\[\E\big[\sup_{0\leq t\leq T}|X(t)-X^\delta(t)|\big]\leq C\delta^{\frac 12}.\]
To show this strong convergence, the main difficulty comes from the estimation of
\begin{equation}\label{s-t}
\int_0^t\p(\La(s)\neq \La^\delta(s))\d s, \quad t>0.
\end{equation}
Using Skorokhod's representations of $(\La(t))$ and $(\La^\delta(t))$, we show that the Lipschitz continuity of the transition rate function $x\mapsto q_{ij}(x)$ can yield that there is a constant $C>0$ such that
\begin{equation}\label{s-t-1}
\int_0^t\p(\La(s)\neq \La^\delta(s))\d s\leq C\delta^{\frac 12}+C\int_0^t\E|X(s)-X^\delta(s)|\d s.
\end{equation}
Due to the importance of the quantity \eqref{s-t} in the analysis of state-dependent regime-switching processes, this type of estimate \eqref{s-t-1} is of great interest by itself.

This paper is organized as follow. In Section 2, we investigate the existence of the invariant measure for state-dependent \rsdp s. We apply the coupling method to prove the convergence of the distributions of $(X(t),\La(t))$ in the Wasserstein distance to its unique invariant measure. We construct the coupling by reflection for \rsdp. To guarantee this coupling to be successful, we improve the result in \cite{XS} by providing weaker and more explicit conditions. Owing to the state-dependence, the transition rate matrices of the jumping process $(\La(t))$ may be different for every step of jumps. The usual technique to handle Markovian switching diffusions, i.e. ensuring first the discrete component meet together, then the continuous component meet together, does not work any more. For the state-dependent case, we have to make two components meet together at the same time. In order to control the state-dependent jumping process $(\La(t))$, we construct a state-independent Markov chain $(\bar \La(t))$ so that almost surely $\La(t)\leq \bar \La(t)$ for all $t\geq 0$ and provide explicit condition in terms of $(\bar \La(t))$ to control the the exponential functional of $(\La(t))$, i.e.
\[\E \e^{\int_0^t \lambda_{\La(s)}\d s}\]
where $\lambda:\S\ra \R$. The limitation of our construction is that the jumping process for each continuous-state $x$ should be of birth-death form, i.e. $q_{ij}(x)=0$ for any $i,\,j\in\S$, $|i-j|\geq 2$, and   $x\in \R^n$.

In Section 3, we explore the Euler-Maruyama's approximation for state-dependent \rsdp s. The key point is the estimate given in Lemma \ref{t-3.2}. The strong convergence of Euler-Maruyama's approximation is presented in Theorem \ref{t-3.3} with the order of error being $\sqrt{\delta}$. Note that this order of error consists with the order of error provided by \cite{YM04} for numerical approximation of Markovian regime-switching diffusion processes.

\section{Invariant measures}
Consider the state-dependent \rsdp\, $(X(t),\La(t))$ defined by \eqref{1.1} and \eqref{1.2}. The assumptions used in this work on the coefficients and transition rate matrix are collected as follows.

For the transition rate matrix $Q(x):=(q_{ij}(x))_{i,j\in\S}$, we shall use the following conditions:
\begin{itemize}
  \item[(Q1)] For each $x\in \R^n$, $(q_{ij}(x))$ is conservative and irreducible.
  \item[(Q2)] $H:=\max_{i\in\S}\sup_{x\in\R^n} q_i(x)<\infty$, where $q_{i}(x)=\sum_{j\neq i}q_{ij}(x)$ for $i\in\S$, $x\in\R^n$.
  \item[(Q3)] There exists a constant $c_q$ so that $|q_{ij}(x)-q_{ij}(y)|\leq c_q|x-y|$, $\forall$ $x,\,y\in\R^n$, \ $i,\,j\in \S$.
\end{itemize}

Concerning the coefficients of SDE \eqref{1.1}, we shall use the following conditions:
\begin{itemize}
  \item[\textbf{(A1)}] There exist constants $\alpha_i\in\R$, $i\in \S$,  such that
  \[2\la x-y,b(x,i)-b(y,i)\raa+2\|\sigma(x,i)-\sigma(y,i)\|_{\mathrm{HS}}^2\leq \alpha_i|x-y|^2, \ \ x,\,y\in\R^n,\ i\in \S.\]
  \item[\textbf{(A2)}] There exists a constant $C_1$ such that
  \[ |b(x,i)|+\|\sigma(x,i)\|_{\mathrm{HS}}\leq C_1,\ \ x\in \R^n,\ i\in \S.\]
  \item[\textbf{(A3)}] There exist constants $C_2>0$ such that
  \begin{equation*}\label{con-sig}
u^\ast \sigma(x,i)^\ast u\geq C_2, \quad \forall \ u\in \R^n, |u|=1, \ x\in \R^n, \,i\in\S.
\end{equation*}
\item[\textbf{(A4)}] There exist some state $i_0\in\S$, constants $p>2$, $C_3>0$ and $\beta\in \R$ such that
\begin{equation*}\label{con-b}
\la x-y, b(x,i_0)-b(y,i_0)\raa+\|\sigma(x,i_0)-\sigma(y,i_0)\|_{\mathrm{HS}}^2 \leq \beta |x-y|^2-C_3|x-y|^p,\ x,y\in\R^n.
\end{equation*}
\end{itemize}
The conditions (Q1)-(Q3) and \textbf{(A1)}-\textbf{(A2)} are used to guarantee the existence of unique non-explosive strong solution of \eqref{1.1} and \eqref{1.2} (cf. for example, \cite{Sh15c}). Besides, condition (Q3) also plays important role in the estimation of $\p\Big(\int_0^t\mathbf 1_{\{\La(s)\neq \La'(s)\}}\d s\Big)$ when studying numerical approximation of state-dependent \rsdp s. Condition \textbf{(A4)} is used to guarantee the constructed coupling processes of the state-dependent \rsdp\, to be successful, which improves the result in \cite{XS} on successful coupling in two aspects: first, the condition \textbf{(A4)} is more explicit than the condition (T1) in \cite{XS}, and hence is easier to be verified; second, in this work it only needs that \textbf{(A4)} holds for at least one state of $\S$,  however, the condition (T1) in \cite{XS} must hold for all states in $\S$.

Next, we introduce Skorokhod's representation of $\La(t)$ in terms of the Poisson random measure as in \cite[Chapter II-2.1]{Sk} or \cite{YZ}. For each $x\in \R^n$, we construct a family of intervals $\{\Gamma_{ij}(x); \ i,j\in \S\}$ on the half line in the following manner:
\begin{align*}
  \Gamma_{12}(x)&=[0,q_{12}(x))\\
  \Gamma_{13}(x)&=[q_{12}(x),q_{12}(x)+q_{13}(x))\\
  &\ldots\\
  \Gamma_{1N}(x)&=[\sum_{j=1}^{N-1}q_{1j}(x), q_{1}(x))\\
  \Gamma_{21}(x)&=[q_1(x),q_1(x)+q_{21}(x))\\
  \Gamma_{23}(x)&=[q_1(x)+q_{21}(x),q_1(x)+q_{21}(x)+q_{23}(x))\\
  &\ldots
\end{align*}
and so on. Therefore, we obtain a sequence of consecutive, left-closed, right-open intervals $\Gamma_{ij}(x)$, each having length $q_{ij}(x)$. For convenience of notation, we set $\Gamma_{ii}(x)=\emptyset$ and $\Gamma_{ij}(x)=\emptyset$ if $q_{ij}(x)=0$.
Define a function $h:\R^n\times \S\times \R\ra \R$ by
\[h(x,i,z)=\sum_{l\in \S} (l-i)\mathbf 1_{\Gamma_{il}(x)}(z).\]
Then the process $(\La(t))$ can be expressed by the SDE
\begin{equation}
  \label{1.3}\d \La(t)=\int_{[0,M]} h(X(t),\La(t-),z)N_1(\d t,\d z),
\end{equation}
where $M=N(N-1) H$, $N_1(\d t,\d z)$ is a Poisson random measure with intensity $\d t\times \mathbf{m}(\d z)$, and $\mathbf{m}(\d z)$ is the Lebesgue measure on $[0,M]$.
Let $p_1(t)$ be the stationary point process corresponding to  Poisson random measure $N_1(\d t,\d z)$. Due to the finiteness of $\mathbf{m}(\d z)$ on $[0, M]$, there is only finite number of jumps of the process $p_1(t)$ in each finite time interval. Let $ 0=\varsigma_0<\vsig_1<\ldots<\vsig_n<\ldots$ be the enumeration of all jumps of $p_1(t)$. It holds that $\lim_{n\ra \infty}\vsig_n=+\infty$ almost surely. Due to \eqref{1.3}, it follows that, if $\La(0)=i$,
\begin{equation}
  \label{1.4} \La(\vsig_1)=i+\sum_{l\in\S}(l-i) \mathbf{1}_{\Gamma_{il}(X(\vsig_1))}(p_1(\vsig_1)).
\end{equation}
This yields that $(\La(t))$ has a jump at $\vsig_1$ (i.e. $\La(\vsig_1)\neq \La(\vsig_1-)$) if $p_1(\vsig_1)$ belongs to the interval $\Gamma_{il}(X(\vsig_1))$ for some $l\neq i$. At any other cases, $(\La(t))$ admits no jump. So the set of jumping times of $(\La(t))$ is a subset of $\{\vsig_1,\vsig_2,\ldots\}$. This fact will be used below without mentioning it again.

To make our computation below more precise, we give out an explicit construction of the probability space used in the sequel.
Let
\[\Omega_1=\{\omega|\ \omega:[0,\infty)\ra \R^n \ \text{is continuous with $\omega(0)=0$}\},
\] which is endowed with the locally uniform convergence topology and the Wiener measure $\p_1$ so that the coordinate process $W(t,\omega):=\omega(t)$, $t\geq 0$, is a standard $n$-dimensional Brownian motion.
Let $(\Omega_2,\mathscr F_2,\p_2)$ be a probability space and $\Pi_{\R}$ be the totality of point functions on $\R$. For a point function $(p(t))$, $D_{p}$ denotes its domain, which is a countable subset of $[0,\infty)$. Let $p_1:\Omega_2\ra \Pi_{\R}$ be a Poisson point process with counting measure $N_1(\d t,\d z)$ on $(0,\infty)\times [0,M]$ defined by
\begin{equation}
  \label{1.5} N_1((0,t)\times U)=\#\{s\in D_{p_1}|\ s\leq t,\ p_1(s)\in U\},\ t>0,\ U\in \mathscr B([0,M]),
\end{equation}
and its intensity measure is $\d t\times\mathbf{m}(\d z)$. Set $(\Omega,\mathscr F,\p)=(\Omega_1\times\Omega_2,\mathscr B(\Omega_1)\times \mathscr F_2,\p_1\times\p_2)$, then under $\p=\p_1\times\p_2$, for $\omega=(\omega_1,\omega_2)$, $t\mapsto \omega_1(t)$ is a Wiener process, which is independent of the Poisson point process $t\mapsto p_1(t,\omega_2)$. Throughout this work, we will work on this probability space $(\Omega,\mathscr F,\p)$.

\subsection{Two points state space case}
To emphasize the idea, we restrict ourself to the situation that the state space $\S$ contains only two points in this subsection. We first present an estimate on the exponential functional of the state-dependent jumping process by comparing it with a state-independent Markov chain through constructing a coupling process of $(\La(t))$ using Skorokhod's representation in \eqref{1.3}. This estimate plays an important role in controlling the evolution of this regime-switching system.

\begin{lem}[Estimate of exponential functional of $(\La(t))$] \label{t-1.1}
Let $(X(t),\La(t))$ satisfy \eqref{1.1} and \eqref{1.2} with $\S=\{1,2\}$.  Let $(\lambda_i)_{i\in \S}$ be a nondecreasing sequence, i.e. $\lambda_1\leq \lambda_2$. Set $\bar q_{12}=\sup_{x\in\R^n} q_{12}(x)$, $\bar q_{21}=\inf_{x\in\R^n} q_{21}(x)$, $\bar q_1=-\bar q_{11}=\bar q_{12}$, and $\bar q_{2}=-\bar q_{22}=\bar q_{21}$. Assume
\begin{equation}\label{con-q}
\bar q_{21}>0,\ \ \bar q_{12}+\bar q_{21}\leq q_{12}(x)+q_{21}(x)\ \text{for every $x\in \R^n$}.
\end{equation}
Set \[\bar Q_{\lambda}=\begin{pmatrix}
  -\bar q_1 & \bar q_{12}\\
  \bar q_{21} &-\bar q_2
\end{pmatrix}  +\begin{pmatrix} \lambda_1 &0\\ 0&\lambda_2\end{pmatrix}, \]
and $\bar \eta=-\max_{\gamma\in \mathrm{spec} \bar Q_\lambda} \mathrm{Re}\, \gamma.$
Then there exists a constant $C>0$ such that
\begin{equation} \label{est-1.1}
  \E \e^{\int_0^t \lambda_{\La(s)}\d s} \leq C \e^{-\bar \eta t}, \ \text{for all $t>0$.}
\end{equation}
\end{lem}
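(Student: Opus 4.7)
The strategy is to dominate $(\La(t))$ pathwise by an auxiliary state-\emph{independent} continuous-time Markov chain $(\bar\La(t))$ on $\S=\{1,2\}$ with generator $\bar Q$ (rows $(-\bar q_{12},\bar q_{12})$ and $(\bar q_{21},-\bar q_{21})$), constructed from the \emph{same} Poisson random measure $N_1$ that drives the Skorokhod representation \eqref{1.3} of $(\La(t))$. Specifically, I define
\[\d \bar\La(t)=\int_{[0,M]}\bar h(\bar\La(t-),z)\,N_1(\d t,\d z),\]
with $\bar h(1,z)=\mathbf{1}_{[0,\bar q_{12})}(z)$ and $\bar h(2,z)=-\mathbf{1}_{[\bar q_{12},\bar q_{12}+\bar q_{21})}(z)$, and start from $\bar\La(0)=\La(0)$. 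Once the pathwise inequality $\La(t)\leq \bar\La(t)$ is established, the monotonicity $\lambda_1\leq \lambda_2$ gives $\lambda_{\La(s)}\leq \lambda_{\bar\La(s)}$, so
\[\E\,\e^{\int_0^t \lambda_{\La(s)}\d s}\leq \E\,\e^{\int_0^t \lambda_{\bar\La(s)}\d s},\]
reducing the problem to estimating the exponential functional of a finite-state Markov chain.

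The verification of $\La(t)\leq \bar\La(t)$ proceeds by induction on the enumeration $0=\vsig_0<\vsig_1<\ldots$ of the jump times of $p_1$. Between jump times both processes are constant, so only the three combinations $(\La(\vsig_n-),\bar\La(\vsig_n-))\in\{(1,1),(2,2),(1,2)\}$ need be checked. In state $(1,1)$, the up-jump interval $\Gamma_{12}(X(\vsig_n))=[0,q_{12}(X(\vsig_n)))$ for $\La$ lies inside the up-jump interval $[0,\bar q_{12})$ for $\bar\La$ (by the definition $\bar q_{12}=\sup_x q_{12}(x)$), so if $\La$ jumps up then $\bar\La$ does too. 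In state $(2,2)$, hypothesis \eqref{con-q} guarantees that $[\bar q_{12},\bar q_{12}+\bar q_{21})\subset\Gamma_{21}(X(\vsig_n))=[q_{12}(X(\vsig_n)),q_{12}(X(\vsig_n))+q_{21}(X(\vsig_n)))$, so if $\bar\La$ jumps down then $\La$ does too. In state $(1,2)$, the intervals $[0,q_{12}(X(\vsig_n)))$ and $[\bar q_{12},\bar q_{12}+\bar q_{21})$ are disjoint (since $\bar q_{12}\geq q_{12}(X(\vsig_n))$), so any transition preserves the inequality or closes the gap.

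Finally, the exponential functional of the Markov chain $(\bar\La(t))$ is handled by Feynman--Kac: the function $u(t,i):=\E_i\,\e^{\int_0^t\lambda_{\bar\La(s)}\d s}$ satisfies the linear ODE system $\partial_t u(t,\cdot)=\bar Q_\lambda u(t,\cdot)$ with $u(0,\cdot)\equiv 1$, hence $u(t,\cdot)=\e^{t\bar Q_\lambda}\mathbf{1}$. Since $\bar Q_\lambda$ is a $2\times 2$ essentially nonnegative (Metzler) matrix, Perron--Frobenius type bounds give $\|\e^{t\bar Q_\lambda}\|\leq C\e^{-\bar\eta t}$, where $-\bar\eta=\max_{\gamma\in\mathrm{spec}\,\bar Q_\lambda}\mathrm{Re}\,\gamma$, which yields \eqref{est-1.1}. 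The technical heart of the argument is the coupling verification in the second paragraph; condition \eqref{con-q} is precisely what is needed to align the jump intervals of the state-dependent $\La$ and the state-independent $\bar\La$ so that the ordering is preserved through every transition.
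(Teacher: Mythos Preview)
Your proof is correct and follows essentially the same approach as the paper: construct $(\bar\La(t))$ from the same Poisson random measure $N_1$ using the intervals $[0,\bar q_{12})$ and $[\bar q_{12},\bar q_{12}+\bar q_{21})$, verify the pathwise domination $\La(t)\leq\bar\La(t)$ by induction over the jump times $\vsig_k$ (the paper checks the same three cases $(1,1)$, $(2,2)$, $(1,2)$ with the same interval inclusions), and then use monotonicity of $(\lambda_i)$ to reduce to the state-independent chain. The only cosmetic difference is the final step: the paper quotes \cite[Proposition~4.1]{Bar} for the bound $\E\,\e^{\int_0^t\lambda_{\bar\La(s)}\d s}\leq C\e^{-\bar\eta t}$, whereas you sketch its content directly via the Feynman--Kac representation $u(t,\cdot)=\e^{t\bar Q_\lambda}\mathbf 1$ and the spectral bound on the matrix exponential.
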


\begin{proof}
  Set $\bar \Gamma_{12}=[0,\bar q_{12})$, $\bar \Gamma_{21}=[\bar q_{12},\bar q_{12}+\bar q_{21})$, $g(1,z)=\mathbf 1_{\bar \Gamma_{12}}(z)$, and $g(2,z)=-\mathbf 1_{\bar \Gamma_{21}}(z)$. Let $(\bar \La(t))$ be the solution of the following SDE
  \begin{equation}
    \label{1.7} \d \bar \La(t)=\int_{[0,M]} g(\La(t-),z)N_1(\d t,\d z), \quad \bar \La(0)=\La(0).
  \end{equation} Then $(\bar\La(t))$ is a jumping process with the transition rate matrix $(\bar q_{ij})$. Note that the process $(\bar \La(t))$ is independent of $\omega_1\in \Omega_1$, which is a crucial  point used in the deduction below.  Recall that $\{\vsig_k; k\geq 1\}$ denotes the set of all jumps of Poisson point process $(p_1(t))$, thus the processes $(\La(t))$ and $(\bar \La(t))$ have no jumps out of the set $\{\vsig_k; k\geq 1\}$ due to the representations \eqref{1.3} and \eqref{1.7}. Hence, in order to show that almost surely $\La(t)\leq \bar \La(t)$ for all $t>0$,  we only need to show almost surely $\La(\vsig_k)\leq \bar \La(\vsig_k)$ for all $k\geq 1$.

  If $\La(0)=\bar \La(0)=1$, then
  \begin{align*}
    \La(\vsig_1)&=1+\mathbf{1}_{\Gamma_{12}(X(\vsig_1))}(p_1(\vsig_1)),\\
    \bar\La(\vsig_1)&=1+\mathbf 1_{\bar \Gamma_{12}}(p_1(\vsig_1)).
  \end{align*}
  By the definition of $\Gamma_{12}(x)$ and $\bar\Gamma_{12}$, it is easy to see that when $p_1(\vsig_1)\in \Gamma_{12}(X(\vsig_1))$, it must hold that $p_1(\vsig_1)\in \bar\Gamma_{12}$. Hence, when $\La(\vsig_1)=2$, it must hold that $\bar \La(\vsig_1)=2$. So $\La(\vsig_1)\leq \bar \La(\vsig_1)$ a.s..

  If $\La(0)=\bar\La(0)=2$, then
  \begin{align*}
    \La(\vsig_1)&=2-\mathbf 1_{\Gamma_{21}(X(\vsig_1))}(p_1(\vsig_1)),\\
    \bar \La(\vsig_1)&=2-\mathbf 1_{\bar\Gamma_{21}}(p_1(\vsig_1)).
  \end{align*}
  If $\bar \La(\vsig_1)=1$, then $p_1(\vsig_1)\in\bar \Gamma_{21}$, which implies $p_1(\vsig_1)\leq\bar q_{12}+\bar q_{21}$, and $p_1(\vsig_1)\geq \bar q_{12}\geq q_{12}(X(\vsig_1))$.
  Invoking the condition that $\bar q_{12}+\bar q_{21}\leq q_{12}(x)+q_{21}(x)$ for every $x\in \R^n$, we have $p_1(\vsig_1)\in \Gamma_{21}(X(\vsig_1))$, and hence $\La(\vsig_1)=1$. So $\La(\vsig_1)\leq \bar \La(\vsig_1)$ a.s. whatever the initial value of $\La(0)=\bar \La(0)$ is 1 or 2.
  In the same manner, we can prove that $\La(\vsig_{k+1})\leq \bar\La(\vsig_{k+1})$ a.s. if $\La(\vsig_k)=\bar\La(\vsig_k)$, $k\geq 2$.

  Now, assuming $\La(\vsig_k)=1< \bar \La(\vsig_k)=2$, we go to prove that $\La(\vsig_{k+1})\leq \bar \La(\vsig_{k+1})$ almost surely. In this case,
  \begin{align*}
    \La(\vsig_{k+1})&=1+\mathbf 1_{\Gamma_{12}(X(\vsig_{k+1}))}(p_1(\vsig_{k+1})),\\
    \bar \La(\vsig_{k+1})&=2-\mathbf 1_{\bar \Gamma_{21}}(p_1(\vsig_{k+1})).
  \end{align*}
   If $\bar \La(\vsig_{k+1})=1$, then $p_1(\vsig_{k+1})\in \bar \Gamma_{21}$, and hence $\bar q_{12}+\bar q_{21} >p_1(\vsig_{k+1})\geq \bar q_{12}\geq q_{12}(X(\vsig_{k+1}))$. Together with the condition that $\bar q_{12}+\bar q_{21}\leq q_{12}(x)+q_{21}(x)$, we get $q_{12}(X(\vsig_{k+1}))\leq p_1(\vsig_{k+1})< q_{12}(X(\vsig_{k+1}))+q_{21}(X(\vsig_{k+1}))$, which implies that $ p_1(\vsig_{k+1})\in \Gamma_{21}(X(\vsig_{k+1}))$ and further $\La(\vsig_{k+1})=1=\bar \La(\vsig_{k+1})$. If $\bar \La(\vsig_{k+1})=2$, it is trivial to see that $\La(\vsig_{k+1})\leq \bar \La(\vsig_{k+1})$ a.s.. Consequently, we obtain $\La(\vsig_{k+1})\leq \bar \La(\vsig_{k+1})$ a.s.. In all, we have proved that
   \begin{equation}\label{com-1}
   \La(t)\leq \bar\La(t)\quad a.s..
   \end{equation}

   By virtue of the nondecreasing property of $(\lambda_i)_{i\in\S}$, it follows that  $\lambda_{\La(t)}\leq \lambda_{\bar \La(t)}$ almost surely, and hence
   \[\E\e^{\int_0^t \lambda_{\Lambda(s)}\d s}\leq \E \e^{\int_0^t\lambda_{\bar \La(s)}\d s},\quad t>0.\]
   According to \cite[Proposition 4.1]{Bar}, there exists a constant $C>0$ such that
   \[\E\e^{\int_0^t \lambda_{\Lambda(s)}\d s}\leq \E \e^{\int_0^t\lambda_{\bar \La(s)}\d s}\leq C\e^{-\bar \eta t},\quad t>0,\]
   and the proof is complete.
\end{proof}

\begin{rem} In Lemma \ref{t-1.1}, the definition of the process $(\bar \La(t))$ depends on the monotonicity of $(\lambda_i)_{i\in \S}$. If $\lambda_1>\lambda_2$, in order to control the functional $\int_0^t\lambda_{\La(s)}\d s$ of $(\La(t))$ via a Markov chain, we need to modify the definition of $(\bar q_{ij})$ as follows:
\[\bar q_{12}=\inf_{x\in \R^n} q_{12}(x),\quad \bar q_{21}=\sup_{x\in \R^n} q_{21}(x).\]
Then it still holds
\[\E\e^{\int_0^t\lambda_{\La(s)}\d s}\leq \E\e^{\int_0^t\lambda_{\bar \La(s)}\d s}\leq C \e^{-\bar \eta t},\] where $\bar \eta$ is corresponding to $\bar Q_\lambda$ using the new definition of $\bar q_{ij}$ as above.
\end{rem}

The existence and uniqueness of the invariant measure for $(X(t),\La(t))$ is deduced by analyzing the convergence of its distribution in the Wasserstein distance. This idea has been used in \cite{CH} and \cite{Sh15b}. The dependence of the transition rate of $(\La(t))$ on the process $(X(t))$ makes it much difficulty to ensure the coupling process to be successful. Next, we shall introduce our coupling process for $(X(t),\La(t))$ and prove it to be successful after some necessary preparations.

Let $(X^{x,i}(t), \La^{x,i}(t))$  and $(X^{y,j}(t),\La^{y,j}(t))$ denote the solutions of \eqref{1.1} and \eqref{1.2} starting from $(x,i)$ and $(y,j)$ respectively. To estimate the Wasserstein distance between $(X^{x,i}(t), \La^{x,i}(t))$  and $(X^{y,j}(t),\La^{y,j}(t))$, we introduce the coupling by reflection as follows:
Set
\begin{equation}\label{e-1}
a(x,i)=\sigma(x,i)\sigma(x,i)^\ast,\ a(x,i,y,j)=\begin{pmatrix}
  a(x,i) &c(x,i,y,j)\\ c(x,i,y,j)& a(y,j)
\end{pmatrix}, \quad x\in \R^n,\ i\in \S,
\end{equation}
where
\[c(x,i,y,j)=\sigma(x,i)\big(\mathrm{I}-2\bar u\bar u^\ast\big)\sigma(y,j)^\ast,\]
and $\bar u=(x-y)/|x-y|$. Here $A^\ast$ denotes the transpose of the matrix $A$.
Consider the following SDEs:
\begin{equation}\label{coup-1}
\d \begin{pmatrix} X(t)\\  Y(t) \end{pmatrix} =\begin{pmatrix} b(X(t),\La(t))\\ b(Y(t),\La'(t)) \end{pmatrix} +G(X(t),\La(t),Y(t),\La'(t))\d \tilde W(t),
\end{equation}
where the matrix $G(x,i,y,j)$ satisfies $G(x,i,y,j)G^\ast(x,i,y,j)=a(x,i,y,j)$, and $(\tilde W(t))$ denotes the $2n$-dimensional Wiener process;
\begin{equation}\label{coup-2}
\begin{split}
  \d \La(t)&= \int_{[0,M]}h(X(t),\La(t-),z)N_1(\d t,\d z)\\
  \d \La'(t)&=\int_{[0,M]}h(Y(t),\La'(t-),z) N_2(\d t,\d z),
\end{split}
\end{equation} satisfying $(X(0),\La(0))=(x,i)$ and $(Y(0),\La'(0))=(y,j)$, where $N_1(\d t,\d z)$ and $N_2(\d t,\d z)$ are mutually independent Poisson random measures with intensity measure $\d t\mathbf{m}(\d z)$. The existence of solution of SDEs \eqref{coup-1} and \eqref{coup-2} can be established in the same way as \eqref{1.1} and \eqref{1.2}.  Then $(X(t),\La(t),Y(t),\La'(t))$ is known as a coupling by reflection of the processes $(X^{x,i}(t),\La^{x,i}(t))$ and $(X^{y,j}(t),\La^{y,j}(t))$.

\begin{lem}\label{t-1.2} Assume that (Q1)-(Q3) and \textbf{(A1)}, \textbf{(A2)} hold. Define
\[q^{\alpha}_{12}=\sup_{x\in \R^n} q_{12}(x),\quad q^{\alpha}_{21}=\inf_{x\in \R^n}q_{21}(x),\ \ \text{if}\  \alpha_1\leq \alpha_2;\] otherwise,
\[q^{\alpha}_{12}=\inf_{x\in\R^n} q_{12}(x),\quad q^{\alpha}_{21}=\sup_{x\in \R^n} q_{21}(x).\]
Put $q^{\alpha}_1=-q^\alpha_{11}=q^\alpha_{12}$, $q^{\alpha}_2=-q^\alpha_{22}=q^{\alpha}_{21}$.
  Set $Q^\alpha=(q_{ij}^\alpha)$, $Q_2=Q^\alpha+\mathrm{diag}(\alpha_1,\alpha_2)$.   Suppose $Q^\alpha$ is irreducible and
  \begin{equation}\label{con-1}
  \eta_\alpha:=-\max_{\zeta\in \mathrm{spec}(Q_2)} \mathrm{Re}\,\zeta>0.
  \end{equation}
Then there exists a constant $C>0$ such that
\begin{equation}
  \label{2.1}\E |X(t)-Y(t)|^2\leq C |x-y|^2 \e^{  -\eta_\alpha  t}, \quad t>0.
\end{equation}
\end{lem}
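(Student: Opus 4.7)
The plan is to apply It\^o's formula to $|X(t)-Y(t)|^2$ along the coupled system \eqref{coup-1}--\eqref{coup-2} and then combine the resulting pathwise bound with the exponential--functional estimate from Lemma \ref{t-1.1}. When $\La(t)=\La'(t)=i$, condition \textbf{(A1)} yields
\[
2\la X-Y,b(X,i)-b(Y,i)\raa + 2\|\sigma(X,i)-\sigma(Y,i)\|_{\mathrm{HS}}^2 \leq \alpha_i|X-Y|^2,
\]
and the factor $2$ in front of $\|\sigma\|_{\mathrm{HS}}^2$ is precisely what is needed to absorb the reflection cross-term coming from $c(X,i,Y,i)=\sigma(X,i)(I-2\bar u\bar u^\ast)\sigma(Y,i)^\ast$ when one traces out the quadratic variation in the It\^o expansion. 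On the complement $\{\La\neq \La'\}$ I would split $b(X,\La)-b(Y,\La')=[b(X,\La)-b(Y,\La)]+[b(Y,\La)-b(Y,\La')]$ (and the same for $\sigma$), apply \textbf{(A1)} to the matched piece and use the uniform bound \textbf{(A2)} to dominate the mismatched piece by a constant. The net outcome is a pathwise estimate of the form
\[
d|X(t)-Y(t)|^2 \leq \alpha_{\La(t)}|X(t)-Y(t)|^2\,\d t + dM_t,
\]
modulo an error supported on $\{\La\neq \La'\}$.

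Next, I would observe that the matrix $Q^\alpha$ from the statement is exactly the comparison matrix $(\bar q_{ij})$ of Lemma \ref{t-1.1} with $\lambda_i=\alpha_i$: the case-split on the monotonicity of $(\alpha_i)$ is what makes the stochastic domination \eqref{com-1} come out with the correct sign. Therefore Lemma \ref{t-1.1} directly supplies
\[
\E\,\e^{\int_0^t \alpha_{\La(s)}\,\d s}\leq C\,\e^{-\eta_\alpha t},
\]
where $\eta_\alpha>0$ is precisely the spectral quantity already assumed in \eqref{con-1}. A Gronwall-type comparison, or equivalently applying It\^o to $\e^{-\int_0^t \alpha_{\La(s)}\d s}|X(t)-Y(t)|^2$, then converts the pathwise bound into
\[
\E|X(t)-Y(t)|^2 \leq C|x-y|^2\,\E\,\e^{\int_0^t \alpha_{\La(s)}\,\d s},
\]
and combining with Lemma \ref{t-1.1} gives \eqref{2.1}.

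The main obstacle I anticipate is the ``mismatch'' error accumulated on $\{\La(t)\neq\La'(t)\}$. Because $N_1$ and $N_2$ in \eqref{coup-2} are independent, this set carries positive Lebesgue measure with positive probability, and \textbf{(A2)} only contributes a constant bound there; propagating this error through Gronwall without degrading the decay rate $\eta_\alpha$ requires care. The natural remedy, consistent with the statement, is to formulate everything in terms of $(\La(t))$ alone---this is exactly why $Q^\alpha$ is a one-chain comparison based on the rates of $\La$---and to absorb the $\La'$-driven mismatch contribution into a term which, after a further Cauchy--Schwarz step, can again be bounded by an exponential functional falling under the scope of Lemma \ref{t-1.1}.
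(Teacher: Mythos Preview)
Your overall plan---It\^o on $|Z(t)|^2$, split the drift into a matched piece controlled by \textbf{(A1)} and a mismatched piece controlled by \textbf{(A2)}, then feed the resulting $\alpha_{\La(t)}$ into Lemma~\ref{t-1.1}---is exactly the skeleton of the paper's argument. But two steps in your write-up do not close, and the paper uses a specific device you have not hit upon.

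First, the passage ``applying It\^o to $\e^{-\int_0^t \alpha_{\La(s)}\d s}|Z(t)|^2$ \ldots\ converts the pathwise bound into $\E|Z(t)|^2\le C|x-y|^2\,\E\e^{\int_0^t\alpha_{\La(s)}\d s}$'' is not valid: It\^o gives you a bound on $\E\big[\e^{-\int_0^t\alpha_{\La(s)}\d s}|Z(t)|^2\big]$, and since $\La(s)$ depends on $X(s)$ (hence on the Brownian path), the exponential weight is correlated with $|Z(t)|^2$ and cannot be split off as a separate expectation. The paper circumvents this by passing to the auxiliary chain $\La^\alpha$ of Lemma~\ref{t-1.1}, which is driven by the Poisson measure alone and is therefore $\omega_2$-measurable. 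One takes $\E_{\p_1}$ first (so $\La^\alpha$ is frozen), applies Gronwall \emph{pathwise in $\omega_2$} to obtain $\E_{\p_1}|Z(t)|^2\le(\cdots)\e^{\int_0^t\alpha_{\La^\alpha(s)}\d s}$, and only then takes $\E_{\p_2}$.

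Second, your proposed handling of the mismatch on $\{\La\neq\La'\}$ (``absorb \ldots\ after a further Cauchy--Schwarz step'') is not a workable mechanism here: a Cauchy--Schwarz on the mismatch term would reintroduce $\E|Z|^2$ with no gain. The paper's resolution is a two-parameter trick. Apply Young's inequality with parameter $\gamma>0$ to the cross terms $2\la Z,b(Y,\La)-b(Y,\La')\raa$ etc., which yields
\[
\d|Z(t)|^2\le\big((\gamma+\alpha_{\La(t)})|Z(t)|^2+C_\gamma\big)\d t+\d M_t
\]
with $C_\gamma$ a constant depending only on $C_1$ and $\gamma$. Then multiply by $\e^{-\lambda t}$ for an arbitrary $\lambda>0$: the constant $C_\gamma$ integrates to $C_\gamma/\lambda$, and after the conditional Gronwall and $\E_{\p_2}$ one obtains $\E|Z(t)|^2\le C\big(|x-y|^2+C_\gamma/\lambda\big)\e^{(\gamma-\eta_\alpha)t}$. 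Sending $\lambda\to\infty$ first kills the additive constant without touching the rate, and then $\gamma\downarrow0$ recovers the sharp decay $\e^{-\eta_\alpha t}$ with prefactor $C|x-y|^2$. This $(\gamma,\lambda)$ limiting order is the missing idea in your sketch.
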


\begin{proof}
  For simplicity of notation, set
  $Z(t)=X(t)-Y(t)$.
  According to the construction of $a(x,i,y,j)$, it holds
  \begin{align*}
    &\mathrm{tr}(a(x,i,y,j))\\
    &=\mathrm{tr}\big(\sigma(x,i)\sigma(x,i)^\ast
    +\sigma(y,j)\sigma(y,j)^\ast-2\sigma(x,i)\sigma(y,j)^\ast\big)+4\frac{(x-y)^\ast}{|x-y|}
    \sigma(y,j)^\ast\sigma(x,i)\frac{(x-y)}{|x-y|}\\
    &=\|\sigma(x,i)-\sigma(y,j)\|_{\mathrm{HS}}^2+4\frac{(x-y)^\ast}{|x-y|}\sigma(y,j)^\ast
    \sigma(x,i)\frac{(x-y)}{|x-y|}.
  \end{align*}
  By \textbf{(A1)}, \textbf{(A2)} and It\^o's formula, we obtain, for any $\gamma>0$,
  \begin{equation}\label{2.2}
    \begin{split}
    \d |Z(t)|^2
    &=\big\{2\la Z(t),b(X(t),\La(t))-b(Y(t),\La'(t))\raa \\ &\quad+ \mathrm{tr}(a(X(t),\La(t),Y(t),\La'(t)))\big\}\d t+\d M_t\\
    &\leq \Big\{\alpha_{\La(t)}|Z(t)|^2 +2\la Z(t), b(Y(t),\La(t))-b(Y(t),\La'(t))\raa \\
    &\quad + 2\|\sigma(Y(t),\La(t))-
    \sigma(Y(t),\La'(t))\|_{\mathrm{HS}}^2\\ &\quad+ 4\frac{(X(t)-Y(t))^\ast}{|X(t)-Y(t)|}\sigma(Y(t),\La(t))^\ast\sigma(X(t),\La(t))
    \frac{(X(t)-Y(t))}{|X(t)-Y(t)|}\Big\}\d t  +\d M_t\\
    &\leq \big\{(\gamma+\alpha_{\La(t)})|Z(t)|^2+\frac{4C_1^2}{\gamma}+12C_1^2\big\}\d t+\d M_t,
    \end{split}
  \end{equation}
  where $(M_t)$ is a martingale with $M_0=0$. By replacing $\bar q_{ij}$ with $q_{ij}^\alpha$, similar to \eqref{1.7}, we can define a Markov chain $(\La^\alpha(t))$ with the transition rate matrix $Q^\alpha$ and satisfying $\alpha_{\La(t)}\leq \alpha_{\La^\alpha(t)}$ for all $t>0$ almost surely.
  Hence, for every $\lambda >0$,
  \begin{align*}
    &\E_{\p_1} \Big[\e^{-\lambda t} |Z(t)|^2\Big]\\
    & \leq |x-y|^2+\int_0^t (4\gamma^{-1}+12)C_1^2\e^{-\lambda s}\d s+\E_{\p_1}\int_0^t (-\lambda+\gamma+\alpha_{\La(s)})\e^{-\lambda s}|Z(s)|^2\d s\\
    &\leq |x-y|^2+\frac{ (4\gamma^{-1}+12)C_1^2}{\lambda}+\int_0^t (-\lambda+\gamma+\alpha_{ \La^{\alpha}(s)})\e^{-\lambda s}\E_{\p_1}|Z(s)|^2\d s.
  \end{align*}
  By Gronwall's inequality, we get
  \[\e^{-\lambda t} \E_{\p_1} |Z(t)|^2\leq \Big(|x-y|^2+\frac{(4\gamma^{-1}+12)C_1^2}{\lambda}\Big)
  \e^{\int_0^t(-\lambda+\gamma+\alpha_{\La^\alpha(s)})\d s}.\]
  Then, taking expectation w.r.t.\! $\p_2$ in both sides of the previous inequality and applying \cite[Proposition 4.1]{Bar}, we obtain that there exists a $C>0$ such that
  \begin{equation}\label{2.3}
  \E|Z(t)|^2 \leq C\big(|x-y|^2+\frac{(4\gamma^{-1}+12)C_1^2}{\lambda}\big)\e^{(\gamma -\eta_2) t},\quad t>0.
  \end{equation}
  By the arbitrariness of $\gamma $ and $\lambda$, letting first $\lambda\ra+\infty$ then $\gamma\downarrow 0$ in \eqref{2.3}, we obtain that
  \begin{equation}\label{2.4}
  \E |Z(t)|^2\leq C|x-y|^2\e^{  -\eta_\alpha t},
  \end{equation} and further that
  \[\sup_{t>0}\E |Z(t)|^2 <\infty\]
  due to the positiveness of $\eta_\alpha$.
\end{proof}

\begin{lem}\label{t-1.3}
Under the same assumptions and notation of Lemma \ref{t-1.2},  it holds that
\begin{equation}\label{3.1}
\sup_{t\geq 0} \E|X^{x,i}(t)|^2\leq C(1+|x|^2),\quad x\in \R^n,\, i\in \S,
\end{equation} where $C$ is a constant.
\end{lem}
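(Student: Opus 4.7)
The plan is to combine an It\^o estimate for $|X(t)|^2$ with the exponential functional bound from Lemma \ref{t-1.1}. First, applying \textbf{(A1)} with $y=0$ and using \textbf{(A2)} together with the elementary inequalities $\|\sigma(x,i)\|_{\mathrm{HS}}^2\leq 2\|\sigma(x,i)-\sigma(0,i)\|_{\mathrm{HS}}^2+2\|\sigma(0,i)\|_{\mathrm{HS}}^2$ and $2\la x,b(0,i)\raa\leq \gamma|x|^2+C_1^2/\gamma$, I would show that
\[
2\la x,b(x,i)\raa+\|\sigma(x,i)\|_{\mathrm{HS}}^2\leq (\alpha_i+\gamma)|x|^2+C_\gamma,\qquad C_\gamma:=\tfrac{C_1^2}{\gamma}+2C_1^2,
\]
for any $\gamma>0$. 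It\^o's formula applied to $|X(t)|^2$ then yields a differential inequality, and taking the conditional expectation $\E_{\p_1}$ with respect to the Wiener factor $\Omega_1$ produces, for each fixed $\omega_2\in\Omega_2$, a scalar linear ODE-type inequality with $\Omega_2$-measurable coefficient $\alpha_{\La(t)}+\gamma$.

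Next, solving this pathwise in $\omega_2$ by variation of parameters gives
\[
\E_{\p_1}|X(t)|^2\leq |x|^2\e^{\int_0^t(\alpha_{\La(s)}+\gamma)\d s}+C_\gamma\int_0^t \e^{\int_s^t(\alpha_{\La(r)}+\gamma)\d r}\d s.
\]
I would then integrate against $\p_2$. Applying Lemma \ref{t-1.1} with $\lambda_i=\alpha_i$ (or the variant in the Remark after Lemma \ref{t-1.1} when $\alpha_1>\alpha_2$), and using the strong Markov property of $(X,\La)$ together with the finiteness of $\S$ to extend the estimate to the shifted interval $[s,t]$, one obtains $\E\e^{\int_s^t\alpha_{\La(r)}\d r}\leq C\e^{-\eta_\alpha(t-s)}$, and hence
\[
\E|X(t)|^2\leq C|x|^2\e^{(\gamma-\eta_\alpha)t}+\frac{CC_\gamma}{\eta_\alpha-\gamma}
\]
for any $\gamma\in(0,\eta_\alpha)$. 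Since $\eta_\alpha>0$, the right-hand side is uniformly bounded in $t\geq 0$, yielding $\sup_{t\geq 0}\E|X(t)|^2\leq C(1+|x|^2)$.

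The main technical point that needs care is the transfer from the $\p_1$-conditional estimate to the full expectation: because the coefficient $\alpha_{\La(s)}$ is random but depends only on $\omega_2$, the variation-of-parameters step is justified by applying classical Gronwall pathwise in $\omega_2$ and then using Fubini, which is legal thanks to the explicit product structure $(\Omega,\F,\p)=(\Omega_1\times\Omega_2,\mathscr B(\Omega_1)\times\F_2,\p_1\times\p_2)$ set up at the start of the section. A secondary point is that Lemma \ref{t-1.1} is stated for the interval $[0,t]$; the shifted bound on $\E\e^{\int_s^t\alpha_{\La(r)}\d r}$ follows by applying the lemma to the process restarted at time $s$, and the resulting constant $C$ remains uniform in $s$ (and in $\La(s)$) because $\S$ is finite.
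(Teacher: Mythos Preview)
There is a genuine gap. Your key justification---``the coefficient $\alpha_{\La(s)}$ is random but depends only on $\omega_2$''---is false in the state-dependent setting that is the whole point of this paper. By the Skorokhod representation \eqref{1.3}, $\La(t)$ evolves according to $h(X(t),\La(t-),z)$, and $X(t)$ is driven by the Brownian motion $W$, which is the coordinate process on $\Omega_1$. Hence $\La(t)$ depends on \emph{both} $\omega_1$ and $\omega_2$. Consequently, after taking $\E_{\p_1}$ you cannot pull $\alpha_{\La(s)}$ outside to obtain a scalar ODE-type inequality in the form you wrote; the term $\E_{\p_1}\big[(\alpha_{\La(s)}+\gamma)|X(s)|^2\big]$ does not factor as $(\alpha_{\La(s)}+\gamma)\,\E_{\p_1}|X(s)|^2$, and the variation-of-parameters/Gronwall step collapses.

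The paper's proof addresses precisely this point. Before taking $\E_{\p_1}$, it uses the pathwise comparison $\La(t)\leq\La^\alpha(t)$ from Lemma~\ref{t-1.1} (cf.\ the construction in Lemma~\ref{t-1.2}) together with the monotonicity of $(\alpha_i)$ to replace $\alpha_{\La(s)}$ by $\alpha_{\La^\alpha(s)}$. The auxiliary chain $\La^\alpha$ is defined purely through the Poisson random measure $N_1$ with \emph{fixed} rates $(q^\alpha_{ij})$, so it is genuinely $\omega_2$-measurable and independent of $\omega_1$; this is exactly the ``crucial point'' flagged in the proof of Lemma~\ref{t-1.1}. With that substitution in place, the $\E_{\p_1}$-Gronwall argument is legitimate, and a single application of the exponential-functional bound on $[0,t]$ suffices---no shifted-interval estimate or Markov-restart argument is needed. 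Your overall outline is otherwise close to the paper's; the missing ingredient is this replacement of $\La$ by the $\omega_1$-independent dominating chain before the Gronwall step.
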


\begin{proof}
Note that condition \textbf{(A1)} implies that for any $\veps>0$, there exists a constant $C_\veps>0$ such that
\begin{equation}\label{3.1.5}
2\la x,b(x,i)\raa+\|\sigma(x,i)\|_{\mathrm{HS}}^2\leq C_{\veps}+(\veps+\alpha_i)|x|^2,\quad x\in \R^n,\,i\in \S.
\end{equation}

By \eqref{3.1.5} and applying It\^o's formula to $X(t)=X^{x,i}(t)$ yields that
    \[\d |X(t)|^2\leq (C_\veps+(\veps+\alpha_{\La(t)})|X(t)|^2)\d t+2\la X(t),\sigma(X(t),\La(t))\d W(t)\raa.\]
  For every $\lambda>0$, we have
  \[\d \big[ \e^{-\lambda t} |X(t)|^2\big]\leq \!\e^{-\lambda t}\big\{\!-\!\lambda |X(t)|^2\!+\!C_\veps\!+\!(\veps\!+\!\alpha_{\La(t)})|X(t)|^2\big\}\d t+2\e^{-\lambda t}\la X(t),\sigma(X(t),\La(t))\d W(t)\raa.\]
  Taking expectation in both sides w.r.t. $\p_1$ and noting $\alpha_{\La(t)}\leq \alpha_{\La_\alpha(t)}$ a.s. by Lemma \ref{t-1.1}, we can deduce that
  \begin{equation}\label{3.3}
  \begin{split}
  \e^{-\lambda t}\E_{\p_1} |X(t)|^2&\leq |x|^2+\frac{C_\veps}{\lambda}+\int_0^t(\veps+\alpha_{\La(s)}-\lambda)\e^{-\lambda s}\E_{\p_1}|X(s)|^2\d s\\
  &\leq |x|^2+\frac{C_\veps}{\lambda}+\int_0^t(\veps+\alpha_{ \La^\alpha(s)}-\lambda)\e^{-\lambda s}\E_{\p_1} |X(s)|^2\d s.
  \end{split}
  \end{equation}
  Using Gronwall's inequality, this yields
  \[\e^{-\lambda t} \E_{\p_1}|X(t)|^2\leq (|x|^2+\frac{C_\veps}{\lambda})\e^{\int_0^t(\veps+\alpha_{ \La^\alpha(s)}-\lambda)\d s}.\]
  Therefore, by \cite[Proposition 4.1]{Bar}, there exists a constant $C$ such that
  \begin{equation}\label{3.3}
  \E|X(t)|^2\leq \big(|x|^2+\frac{C_\veps}{\lambda}\big)\E\e^{\int_0^t\veps+\alpha_{ \La^\alpha(s)}\d s}\leq C\big(|x|^2+\frac{C_\veps}{\lambda}\big)\e^{-(\eta_\alpha-\veps) t}.
  \end{equation}
  Setting $\veps=\frac 12\eta_\alpha>0$, we can deduce from \eqref{3.3} that
  \[\sup_{t\geq 0} \E|X(t)|^2 \leq C(1+|x|^2).\]
\end{proof}


\begin{lem}\label{t-1.4}
Assume that (Q1)-(Q3), \textbf{(A1)}-\textbf{(A4)} and \eqref{con-q} hold. Then the coupling  $\big(X(t),\La(t)$, $Y(t),\La'(t)\big)$ determined by \eqref{coup-1} and \eqref{coup-2} is a successful coupling, that is,
\[T:=\inf\{t>0;\ (X(t),\La(t))=(Y(t),\La'(t))\}<\infty  \ \ a.s.\]
\end{lem}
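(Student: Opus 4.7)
My plan is to establish $T<\infty$ a.s.\ by combining the radial contraction produced by reflection coupling with the super-dissipativity of \textbf{(A4)} at the distinguished state $i_0$, and then handling the simultaneous meeting of the discrete components through a strong Markov argument. Set $Z(t):=X(t)-Y(t)$. Following the computation in the proof of Lemma~\ref{t-1.2}, I would first apply It\^o's formula to $|Z(t)|^2$ but now partition the time axis according to whether $\La(t)=\La'(t)$ and, when so, whether the common value equals $i_0$. On $\{\La(t)=\La'(t)=i_0\}$, assumption \textbf{(A4)} contributes the super-quadratic dissipation $-2C_3|Z(t)|^p$, while on the remaining sets the drift is controlled by $\alpha_{\La(t)}|Z(t)|^2$ plus a bounded error coming from \textbf{(A2)} whenever $\La(t)\ne\La'(t)$. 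Applying the comparison $\alpha_{\La(t)}\le\alpha_{\bar\La(t)}$ of Lemma~\ref{t-1.1}, and using \eqref{con-q} together with $\bar q_{21}>0$ to bound $\p(\La(s)\ne\La'(s))$ by an exponentially decaying term (via the same comparison technique applied to the two-component chain $(\La,\La')$), I would deduce an estimate of the form
\[\E|Z(t)|^2\le C\,\e^{-\eta t}|x-y|^2\qquad\text{for some }\eta>0.\]

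Next, under reflection coupling~\eqref{coup-1} the radial process $|Z(t)|$ is a one-dimensional semimartingale whose quadratic variation has density bounded below by a positive constant, by \textbf{(A3)}. Combining this ellipticity with the $L^1$-decay from the previous step and with a standard hitting-time comparison against a reflected Brownian motion with negative drift, I would conclude that $\tau_0:=\inf\{t>0:Z(t)=0\}<\infty$ almost surely.

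Finally, I need to argue that also $\La(T)=\La'(T)$ at some $T$ for which $|Z(T)|=0$. Because $\La,\La'$ are driven by the independent Poisson measures $N_1,N_2$, they need not agree at $\tau_0$. To remedy this I would apply the strong Markov property at a sequence of successive returns of $|Z|$ to arbitrarily small neighbourhoods of $0$. On each such excursion, the explicit lower bound $\bar q_{21}>0$ together with the Lipschitz bound (Q3) yield a uniform positive lower bound on the conditional probability that, within a fixed short time window, $N_1$ and $N_2$ fire in a pattern realigning $\La$ and $\La'$ while $|Z|$ remains within a prescribed small neighbourhood of $0$; a Borel--Cantelli argument on these i.i.d.-like chances then gives $T<\infty$ a.s. This last step, rather than the radial analysis, is the principal difficulty: the independent drivers of $\La$ and $\La'$ preclude any automatic synchronization of the discrete components, and it is precisely the interplay between \eqref{con-q} and the super-dissipativity of \textbf{(A4)} that forces the radial contraction and the discrete re-alignment to happen on compatible time scales.
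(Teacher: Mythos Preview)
Your plan has a genuine gap, and the order of your argument is essentially inverted relative to what the assumptions allow.

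First, the decay estimate $\E|Z(t)|^2\le C\e^{-\eta t}|x-y|^2$ that you want to extract in your first paragraph is not available under the hypotheses of Lemma~\ref{t-1.4}. That estimate is the content of Lemma~\ref{t-1.2}, which requires the spectral condition~\eqref{con-1} ($\eta_\alpha>0$); but Lemma~\ref{t-1.4} does \emph{not} assume~\eqref{con-1}. Assumption~\textbf{(A4)} provides the super-quadratic term $-C_3|Z|^p$ only on the event $\{\La(t)=\La'(t)=i_0\}$, and you have no a~priori control on the occupation time of that event, so you cannot upgrade \textbf{(A1)} to a global exponential contraction of $\E|Z(t)|^2$. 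Consequently your Step~1 collapses, and with it the comparison in Step~2 that was supposed to yield $\tau_0<\infty$.

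Second, your Step~3 has the synchronization running the wrong way. You try to hit $Z=0$ first and then realign $(\La,\La')$ while $|Z|$ stays small; but while $\La\neq\La'$ the drift mismatch $b(\cdot,\La)-b(\cdot,\La')$ is of order one by \textbf{(A2)}, so $|Z|$ leaves any small neighbourhood in bounded time, and a uniform lower bound on the realignment probability within that window is not forthcoming from (Q3) and $\bar q_{21}>0$ alone. The paper proceeds in the opposite order: it first shows (using the domination of Lemma~\ref{t-1.1} applied to each of $\La,\La'$ separately) that the stopping time $\tau=\inf\{t:\La(t)=\La'(t)=i_0\}$ is finite a.s. Once at $(i_0,i_0)$, the discrete pair stays there for an exponential holding time bounded below by the first jump of either Poisson measure, i.e.\ $\p(\eta_1\ge t)\ge \e^{-2Mt}$. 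During that holding interval the continuous pair evolves as the reflection coupling in the \emph{fixed} environment $i_0$; here \textbf{(A4)} is used cleanly, via the Chen--Li radial criterion, to show $\sup_{x,y}\E_{x,y}T^{(1)}<\infty$ and hence $\p(T^{(1)}<t_0)\ge 1/2$ for some $t_0$ independent of the starting point. Combining the two gives a uniform lower bound $\delta_2=\tfrac12\e^{-2Mt_0}$ on the probability of full coupling during each visit of $(\La,\La')$ to $(i_0,i_0)$, and a geometric-trials argument finishes. The role of \textbf{(A4)} is therefore not to produce global $L^2$-decay of $Z$, but to guarantee that the fixed-environment coupling at $i_0$ succeeds uniformly fast; that is the idea your plan is missing.
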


\begin{proof}
  Without loss of generality, we can assume that the condition \textbf{(A4)} holds for  $i_0=1$. Otherwise, we can rearrange the order of $\S$.

  If $(\La(0),\La'(0))\neq (1,1)$, the proof is divided into three steps. Otherwise, we can start directly from the second step below.

  \noindent\textbf{Step 1}:
  Set
  \begin{equation}\label{4.1}
  \tau=\inf\{t\geq 0; \La(t)=\La'(t)=1\},
  \end{equation}  and we shall first show the stopping time $\tau$ is almost surely finite.
 Set
 \[\bar q_{12}=\sup_{x\in\R^n}q_{12}(x),\quad \bar q_{21}=\inf_{x\in\R^n} q_{21}(x).\]
 Assume that $\bar q_{12},\bar q_{21} >0$.  Define $\bar \Gamma_{12},\,\bar \Gamma_{21}$, $g(1,z)$ and $g(2,z)$ in the same way as Lemma \ref{t-1.1}. Set
 \begin{equation}\label{4.2}
 \begin{split}
   \d \La^{(1)}(t)&=\int_{[0,M]} g(\La^{(1)}(t-),z)N_1(\d t, \d z),\quad \La^{(1)}(0)=\La(0),\\
   \d \La^{(2)}(t)&=\int_{[0,M]} g(\La^{(2)}(t-),z) N_2(\d t, \d z),\quad \La^{(2)}(0)=\La'(0).
   \end{split}
 \end{equation}
 According to \eqref{com-1} in Lemma \ref{t-1.1},  it holds almost surely $\La(t)\leq \La^{(1)}(t)$ and $\La'(t)\leq \La^{(2)}(t)$, $t\geq 0$. The mutual independence of $N_1(\d t,\d z)$ and $N_2(\d t,\d z)$ yields that $(\La^{(1)}(t))$ and $(\La^{(2)}(t))$ are also mutually independent. Put
 \[\tau'=\inf\{t\geq 0; \La^{(1)}(t)=\La^{(2)}(t)=1\}\]
 Then it is easy to see that
 \begin{equation}\label{4.3}
 \tau\leq \tau',\ \ a.s.
 \end{equation}
 $(\La^{(1)}(t), \La^{(2)}(t))$ is an independent coupling corresponding to the operator $\bar Q=(\bar q_{ij})$ and itself (cf. for instance, \cite{Chen}). Due to the irreducibility of $\bar Q$ and the finiteness of $\S\times \S$, there exists a positive constant $\theta$ such that
 \begin{equation*}
 \p(\tau'\geq t)\leq \e^{-\theta t},\quad t>0.
 \end{equation*}
 Invoking \eqref{4.3}, it holds that
 \begin{equation}\label{4.4}
 \p(\tau\geq t)\leq \p(\tau'\geq t)\leq \e^{-\theta t},\quad t>0,
 \end{equation}
  and hence $\p(\tau=\infty)=0$.

\noindent \textbf{Step 2}: Using the notation introduced in \eqref{coup-1}, let $(X^{(1)}(t),Y^{(1)}(t))$ be the solution of the following SDE:
  \begin{equation}\label{4.5}
  \d \begin{pmatrix} X^{(1)}(t)\\  Y^{(1)}(t) \end{pmatrix} =\begin{pmatrix} b(X^{(1)}(t),1)\\ b(Y^{(1)}(t),1) \end{pmatrix}\d t +G(X^{(1)}(t),1,Y^{(1)}(t),1)\d \tilde W(t),
  \end{equation} satisfying $(X^{(1)}(0), Y^{(1)}(0))=(x,y)$,
  which is the corresponding diffusion process of $(X(t), Y(t))$ in the fixed environment $(i,j)=(1,1)$. We shall use the criteria established in \cite{CL}  to verify this is a successful coupling. To estimate the coupling time, as done in \cite{CL}, we introduce the following notation:
  \begin{align*}
    A(x,y)&=a(x,1)+a(y,1)-2c(x,1,y,1),\\
    B(x,y)&=\la x-y, (b(x,1)-b(y,1))(x-y)\raa,\\
    \bar A(x,y)&=\la (x-y), A(x,y)(x-y)\raa /|x-y|^2,\ \ x\neq y.
  \end{align*}
  By the condition \textbf{(A3)}, it holds
  \begin{align*}
  \inf_{|x-y|=r}\bar A(x,y)&=\inf_{|x-y|=r}|(\sigma(x,1)-\sigma(y,1))\bar u|^2 +4 (\bar u^\ast\sigma^\ast(x,1)\bar u)(\bar u^\ast \sigma(y,1)^\ast \bar u)\\
  &\geq 4C_2^2,
  \end{align*} where $\bar u=(x-y)/|x-y|$.
  According to the condition \eqref{con-b},
  \begin{align*}
    &\sup_{|x-y|=r} \frac{\mathrm{tr}(A(x,y))-\bar A(x,y)+2B(x,y)}{\bar A(x,y)}\\
    &\leq  \sup_{|x-y|=r}\frac{\beta|x-y|^2-C_3|x-y|^p}{\bar A(x,y)}-1\leq \frac{\beta r^2-C_3 r^p}{4C_2^2}.
  \end{align*}
  Set $\alpha(r)=4C_2^2$, $\gamma(r)=\frac{\beta r^2-C_3 r^p}{4C_2^2}$, and
  \begin{gather*}
    C(r)=\exp\Big[\int_1^r\frac{\gamma(u)}{u}\d u\Big].
  \end{gather*}
  Analogous to \cite[Theorems 4.2 and 5.1]{CL}, for positive integers $\ell$ and $k$, set
  \begin{align*}
    T^{(1)}&=\inf\{t\geq 0; X^{(1)}(t)=Y^{(1)}(t)\},\\
    S_\ell&=\inf\{t\geq 0; |X^{(1)}(t)-Y^{(1)}(t)|>\ell\},\\
    T_k&=\inf\{t\geq 0; |X^{(1)}(t)-Y^{(1)}(t)|<\frac 1n\}.
  \end{align*}
  Put $T_{k,\ell}=T_k\wedge S_{\ell}$, and
  \begin{align*}
    F_{k,\ell}(r)=-\int_{1/k}^r C(s)^{-1}\Big(\int_s^\ell \frac{C(u)}{\alpha(u)}\d u\Big)\d s.
  \end{align*}
  Then it holds
  \begin{gather*}
    -\infty<F_{k,\ell}(r)\leq 0,\quad F_{k,\ell}'(r)\leq 0,\\
    F''_{k,\ell}(r)+\frac{F_{k,\ell}'(r)\gamma(r)}{r}=\frac1{\alpha(r)}.
  \end{gather*}
  Applying Dynkin's formula, we get that
  \begin{align*}
    &\E_{x,y} F_{k,\ell}(|X^{(1)}(t\wedge T_{k,\ell})-Y^{(1)}(t\wedge T_{k,\ell})|)-F_{k,\ell}(|x-y|)\\
    &=\frac 12\E_{x,y} \!\int_0^{t\wedge T_{k,\ell}}\!\!\!\bar A(X^{(1)}(s), Y^{(1)}(s)) F_{k,\ell}''(|Z^{(1)}(s)|)+ F_{k,\ell}'(|Z^{(1)}(s)|)\big[\mathrm{tr} A(X^{(1)}(s), Y^{(1)}(s))\\ &\qquad \qquad-\bar A(X^{(1)}(s),Y^{(1)}(s))+2B(X^{(1)}(s),Y^{(1)}(s)) \big]\big/|Z^{(1)}(s)|\ \d s\\
    &\geq \frac 12 \E_{x,y} \big(t\wedge T_{k,\ell}\big)
  \end{align*}
  Letting $t\ra \infty$, this yields that
  \[\E_{x,y} \,T_{k,\ell} \leq -2 F_{k,\ell}(|x-y|).\]
  Set
  \[F(r)=\lim_{k\ra \infty}\lim_{\ell\ra \infty} F_{k,\ell}=-\int_0^r C(s)^{-1}\Big(\int_s^\infty\frac{C(u)}{\alpha(u)}\d u\Big)\d s.\]
  Letting $\ell\ra \infty$ and then $k\ra \infty$, we obtain
  \begin{equation}\label{4.6}
  \E_{x,y} \,T^{(1)}\leq -2 F(|x-y|)
  \end{equation}
  It is simple to check that
  \[C(s)^{-1}\int_s^\infty \frac{C(u)}{\alpha(u)}\d u\sim s^{1-p},\quad \text{as $s\ra \infty$}.\]
  As $p>2$, this yields that
  \[\lim_{r\ra \infty} F(r)=-\int_0^\infty C(s)^{-1}\Big(\int_s^{\infty}\frac{C(u)}{\alpha(u)}\d u\Big) \d s>-\infty,\]
  and further
  \begin{equation}\label{4.7}
  \sup_{x,y\in \R^n}\E_{x,y} T^{(1)}<\infty.
  \end{equation}
  Therefore, by Chebyshev's inequality,  there exists $t_0>0$ such that for any initial point $(x,y)$,
  \begin{equation}\label{4.8}
  \p(T^{(1)} <t_0)\geq \frac 12.
  \end{equation}

  \noindent \textbf{Step 3}: Define
  \[\eta_1=\inf\{t\geq 0; \ (\La(t),\La'(t))\neq (\La(0),\La'(0))\}.\]
  By \eqref{coup-2} and the property of Poisson point process, it is easy to see that
  $\eta_1\geq \vsig^{(1)}_1\wedge \vsig^{(2)}_1$, where $\vsig^{(1)}_1$ and $\vsig^{(2)}_1$  are the first jumping times of the Poisson point processes $(p_1(t))$ and $(p_2(t))$ respectively. So
  \[\p(\eta_1\geq t)\geq \p(\vsig^{(1)}_1 \geq t)\p(\vsig^{(2)}_1\geq t)=\e^{-2Mt},\ \ t>0.\]
  Set $\zeta_0=0$,
  \begin{align*}
    \zeta_1&=\inf\{t\geq 0; (\La(t),\La'(t))\neq (\La(0),\La'(0))\},\\
    \zeta_{2m}&=\inf\{t\geq \zeta_{2m-1}; (\La(t),\La'(t))=(\La(0),\La'(0))\},\\
    \zeta_{2m+1}&=\inf\{t\geq \zeta_{2m}; (\La(t),\La'(t))\neq (\La(0),\La'(0))\}, \ \ m=1,2,\ldots.
  \end{align*}
  We have the following estimate on the coupling time $T$:
  \begin{equation}\label{4.9}
  \begin{split}
    \p^{(x,1,y,1)}(T\in [0,\zeta_1))&= \p^{(x,1,y,1)}(T\in [0,\eta_1))\\
    & \geq \p^{(x,1,y,1)}(\eta_1\geq t_0)\p^{(x,1,y,1)}( T\in [0,\eta_1)|\eta_1\geq t_0)\\
    &\geq \p^{(x,1,y,1)}(\eta_1\geq t_0)\p^{(x,y)}(T^{(1)}<t_0)\\
    &\geq \e^{-2Mt_0}/2=:\delta_2>0,
  \end{split}
  \end{equation}
  where $t_0$ is determined by \eqref{4.8} and is independent of the initial point of  $(X^{(1)}(t), Y^{(1)}(t))$.
   Therefore,
   \begin{equation}\label{4.10}
   \begin{split}
   \p^{(x,i,y,j)}(T=\infty)&=\p^{(x,i,y,j)}
   \big(\mathbf 1_{\{\tau<\infty\}}\p^{(X(\tau),\La(\tau),Y(\tau),\La'(\tau))}(T=\infty)\big)\\
   &\leq \p^{(x,i,y,j)}\Big(\mathbf 1_{\{\tau<\infty\}}\p^{(X(\tau),1,Y(\tau),1)}
   \big(T\not\in\bigcup_{m=0}^K[\zeta_{2m},\zeta_{2m+1})\big)\Big)\\
   &\leq \p^{(x,i,y,j)}\Big(\mathbf 1_{\{\tau<\infty\}}\p^{(X(\tau),1,Y(\tau),1)}\big(
   T\not\in \bigcup_{m=0}^{K-1}[\zeta_{2m},\zeta_{2m+1})\big)\\
   &\hspace{2cm}\cdot \p^{(X(\zeta_{2K}),1,Y(\zeta_{2K}),1)}
   \big(T\not\in [0,\zeta_1)\big)\Big)\\
   &\leq \p^{(x,i,y,j)}\Big(\mathbf 1_{\{\tau<\infty\}}\p^{(X(\tau),1,Y(\tau),1)}\big(
   T\not\in \bigcup_{m=0}^{K-1}[\zeta_{2m},\zeta_{2m+1})\big)\Big)(1-\delta_2)\\
   &\leq (1-\delta_2)^{K+1},
   \end{split}
   \end{equation}
   where in the last step we have used the estimate \eqref{4.9} recursively. Letting $K$ tend to $\infty$, we finally get the desired estimate that $\p^{(x,i,y,j)}(T=\infty)=0$, and complete the proof.
\end{proof}

\begin{rem}
  In \cite{XS}, together with F. Xi, we have discussed the question on the existence of successful couplings for state-dependent regime-switching processes. In that work, we imposed a condition (Assumption 2.4 (i) therein) which means that for every fixed environment the corresponding coupling process is successful uniformly relative to the initial points in some sense.  Here, Lemma \ref{t-1.4} weakens this condition to assume only that there exists at least a fixed environment so that the corresponding coupling process in this fixed environment is successful uniformly with respect to initial points.
\end{rem}

Now we introduce the Wasserstein distance used in this work.
Set
\[\rho((x,i),(y,j))=\mathbf 1_{i\neq j}+|x-y|,\ \ x,\,y\in \R^n,\ i,\,j\in\S.\]
The Wasserstein distance between every two probability measures $\nu_1,\,\nu_2$ on $\R^n\times\S$ is defined by
\begin{equation}\label{wass}
W_\rho(\nu_1,\nu_2)=\inf_{\pi\in \mathscr{C}(\nu_1,\nu_2)}\Big\{\int_{(\R^n\times\S)^2}\!\!\rho((x,i),(y,j))\d \pi((x,i),(y,j))\Big\},
\end{equation}
where $\mathscr C(\nu_1,\nu_2)$ denotes the set of all couplings of $\nu_1$ and $\nu_2$ on $(\R^n\times\S)^2$. This kind of Wasserstein distance has been used in \cite{Sh15b} to investigate the recurrent property of regime-switching diffusion process. \cite{CH} used further a truncation from above on $\rho$ to define the Wasserstein distance.

\begin{thm}\label{t-inv} Let $(X(t), \La(t))$ be the solution of \eqref{1.1} and \eqref{1.2} with initial value $(X(0),\La(0))=(x,i)$. Denote the distribution of $(X(t),\La(t))$ with initial value $(X(0),\La(0))=(x,i)$ in $\R^n\times\S$ by $\delta_{(x,i)}P_{t}$ for $t\geq 0$. Assume (Q1)-(Q3), and \textbf{(A1)}-\textbf{(A4)} hold. Suppose $Q^\alpha$ defined as in Lemma \ref{t-1.1} is irreducible and \eqref{con-1} hold.
Then  there exists a unique invariant probability measure $\mu$ on $\R^n\times \S$ such that $\mu P_{t}=\mu$ for every $t>0$, every $(x,i)\in \R^n\times\S$, and
\[\lim_{t\ra \infty}W_\rho(\delta_{(x,i)}P_{t},\mu)=0\quad \text{for any $(x,i)\in\R^n\times \S$}.\]
\end{thm}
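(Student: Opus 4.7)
The plan is to exploit the reflection coupling $(X(t),\La(t),Y(t),\La'(t))$ from \eqref{coup-1}--\eqref{coup-2} to derive a quantitative $W_\rho$-contraction estimate, then combine it with the uniform second-moment bound of Lemma \ref{t-1.3} to extract an invariant probability measure via a Krylov--Bogoliubov argument. Uniqueness of $\mu$ and the convergence $W_\rho(\delta_{(x,i)}P_t,\mu)\to 0$ will both follow by integrating the same contraction against an appropriate product measure.

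For the coupling bound, the joint law of $(X(t),\La(t),Y(t),\La'(t))$ with initial condition $((x,i),(y,j))$ is itself a coupling of $\delta_{(x,i)}P_t$ and $\delta_{(y,j)}P_t$, so
\[
W_\rho(\delta_{(x,i)}P_t,\delta_{(y,j)}P_t)\le \E\big[\mathbf 1_{\La(t)\neq \La'(t)}+|X(t)-Y(t)|\big]\le \p^{(x,i,y,j)}(T>t)+\sqrt{C}\,|x-y|\,\e^{-\eta_\alpha t/2}.
\]
Indeed, $\mathbf 1_{\La(t)\neq \La'(t)}\le \mathbf 1_{T>t}$ since the two components agree after the successful coupling time $T$, and $\E|X(t)-Y(t)|\le (\E|X(t)-Y(t)|^2)^{1/2}\le \sqrt{C}|x-y|\e^{-\eta_\alpha t/2}$ by Lemma \ref{t-1.2}. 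Both terms on the right-hand side tend to $0$ as $t\to\infty$, thanks to Lemma \ref{t-1.4}.

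For existence, Lemma \ref{t-1.3} yields $\sup_{t\ge 0}\E|X^{x,i}(t)|^2<\infty$, so $\{\delta_{(x,i)}P_t\}_{t\ge 0}$ is tight on $\R^n\times\S$ and the Ces\`aro averages $\mu_T=\tfrac 1T\int_0^T\delta_{(x,i)}P_t\,\d t$ inherit this uniform bound. A standard Krylov--Bogoliubov argument produces a weak subsequential limit $\mu$ which is an invariant probability measure, and weak lower semicontinuity of $\nu\mapsto\int(1+|y|^2)\d\nu$ shows that $\mu$ has finite second moment. Uniqueness is obtained by coupling two invariant probabilities $\mu_1,\mu_2$ through the product $\mu_1\otimes\mu_2$ followed by the reflection coupling:
\[
W_\rho(\mu_1,\mu_2)=W_\rho(\mu_1P_t,\mu_2P_t)\le \int\!\!\int\Big[\sqrt{C}|x-y|\,\e^{-\eta_\alpha t/2}+\p^{(x,i,y,j)}(T>t)\Big]\d\mu_1(x,i)\,\d\mu_2(y,j),
\]
and dominated convergence (dominating function $\sqrt{C}(|x|+|y|)+1$, integrable by the finite first moments of $\mu_1,\mu_2$) sends the right-hand side to $0$, forcing $\mu_1=\mu_2$. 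Applying the same inequality with $\mu_1=\delta_{(x,i)}$ and $\mu_2=\mu$ yields the stated convergence.

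The main obstacle is justifying the limit $\int \p^{(x,i,y,j)}(T>t)\,\d\mu(y,j)\to 0$: pointwise vanishing is supplied by Lemma \ref{t-1.4} and the trivial bound by $1$ is an integrable dominating function, so what really needs checking is measurability of $(y,j)\mapsto \p^{(x,i,y,j)}(T>t)$, which is standard from the well-posedness and continuous dependence on initial data of the coupling SDEs \eqref{coup-1}--\eqref{coup-2} but deserves a careful sentence. A secondary point is verifying that the Krylov--Bogoliubov limit $\mu$ has finite first moment, required for the $|x-y|$-term to vanish under dominated convergence; this is also handled by the lower semicontinuity argument together with Lemma \ref{t-1.3}.
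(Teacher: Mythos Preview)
Your argument is correct and rests on the same three lemmas the paper invokes (Lemma~\ref{t-1.2} for the $L^2$-contraction of $X-Y$, Lemma~\ref{t-1.3} for uniform second moments and tightness, Lemma~\ref{t-1.4} for $T<\infty$ a.s.), but the organization differs in two noteworthy ways. First, for the two-point bound on $W_\rho(\delta_{(x,i)}P_t,\delta_{(y,j)}P_t)$ you go straight to
\[
\E\mathbf 1_{\La(t)\neq\La'(t)}+\E|X(t)-Y(t)|\le \p(T>t)+\sqrt{C}\,|x-y|\,\e^{-\eta_\alpha t/2},
\]
whereas the paper interposes the auxiliary stopping time $\tau=\inf\{t:\La(t)=\La'(t)=1\}$, splits at $\{\tau<\kappa t\}$ versus $\{\tau\ge \kappa t\}$, and handles the latter via Cauchy--Schwarz with the moment bound of Lemma~\ref{t-1.3} and the exponential tail \eqref{4.4} for $\tau$; only on $\{\tau\le\kappa t\}$ does it fall back on $\p(T\ge t)$ as in \eqref{5.1}. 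Your decomposition is shorter and loses nothing, since neither argument produces a rate for the $\p(T>t)$ piece. Second, for existence the paper does not use Ces\`aro averages: it extracts a $W_\rho$-convergent subsequence $\delta_{(x,i)}P_{t_k}\to\mu$ directly from tightness plus uniform integrability, and then obtains invariance from the identity $\delta_{(x,i)}P_sP_{t_k}=\delta_{(x,i)}P_{t_k}P_s$ together with the already-established two-point contraction (which forces \emph{every} initial law with finite first moment to converge along $t_k$ to the same $\mu$). Your Krylov--Bogoliubov route is a perfectly standard alternative; the paper's route has the minor advantage that convergence of $\delta_{(x,i)}P_t$ to $\mu$ along the full sequence and uniqueness of $\mu$ fall out simultaneously from the same two-point estimate, without the separate dominated-convergence step you perform. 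The measurability and finite-moment caveats you flag are genuine but routine, and the paper treats them at the same level of informality.
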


\begin{proof}
  In order to estimate the Wasserstein distance between $\delta_{(x,i)}P_t$ and $ \delta_{(y,j)} P_t$ with $i\neq j$, we use the coupling process determined by \eqref{coup-1} and \eqref{coup-2}.

 For $\kappa\in (0,1)$, it holds that
 \begin{align*}
   &W_{\rho}(\delta_{(x,i)}P_t, \delta_{(y,j)}P_t)\leq \E\big[|X(t)-Y(t)|+\mathbf 1_{\La(t)\neq \La'(t)}\big]\\
   &=\E\big[\big( |X(t)-Y(t)|+\mathbf 1_{\La(t)\neq \La'(t)} \big)\mathbf 1_{\{\tau <\kappa t\}}\big]\\
    &\quad + \E\big[ \big( |X(t)-Y(t)| +\mathbf 1_{\La(t)\neq \La'(t)}\big)\mathbf 1_{\{\tau \geq \kappa t\}}\big]\\
   &\leq \E\big[\mathbf 1_{\{\tau<\kappa t\}}\E\big[\big(|X(t)-Y(t)| +\mathbf 1_{\La(t)\neq \La'(t)}\big)\big|\mathscr F_{\tau}\big]\big]\\
   &\quad +\E\big[( 1+|X(t)|+|Y(t)|)\mathbf 1_{\{\tau\geq \kappa t\}}\big]\\
   &\leq  \E\big[\mathbf 1_{\{\tau\leq \kappa t\}} \E[|X(t)-Y(t)|\big|\mathscr F_{\tau}]\big]+\E\big[\mathbf 1_{\{\tau\leq \kappa t\}}\mathbf 1_{\La(t)\neq \La'(t)}\big]\\
   &\quad +\sqrt{\E[( 1+|X(t)|+|Y(t)|)^2]}\sqrt{\p(\tau\geq \kappa t)}\\
   &\leq  c(1+|x|+|y|)(\e^{-\frac 12 \theta \kappa t}+\e^{-\frac{\eta_\alpha}{2}(1-\kappa)t})+\E\big[\mathbf 1_{\{\tau\leq \kappa t\}}\mathbf 1_{\La(t)\neq \La'(t)}\big],
 \end{align*}
 where in the last  step we have used the estimates \eqref{4.4}, Lemma \ref{t-1.2}, and Lemma \ref{t-1.3}.

 Note that after the stopping $\tau$, the processes $(\La(t))$ and $(\La'(t))$ do not necessarily move together due to the dependence of  $(q_{ij}(x))$ on the component $x$. But, after the coupling time $T$ given in Lemma \ref{t-1.4}, the processes $(X(t),\La(t))$ and $(Y(t),\La'(t))$ will move together.  This difficulty does not exist for state-independent switching, and we can get exponential convergence of the Wasserstein distance between $\delta_{(x,i)} P_t$ and $\delta_{(y,j)} P_t$. Refer to \cite{Sh15b} and \cite{BSY} for more details.

 However, under the help of Lemma \ref{t-1.4}, we have
 \begin{equation}\label{5.1}
 \begin{split}
   \E\big[\mathbf 1_{\{\tau\leq \kappa t\}}\mathbf 1_{\La(t)\neq \La'(t)}\big]&\leq \E\big[\mathbf 1_{\{T\geq t\}} \mathbf 1_{\La(t)\neq \La'(t)}\big]
   +\E\big[\mathbf 1_{\{T<t\}}\mathbf 1_{\La(t)\neq \La'(t)}\big]\\
   &\leq \E[\mathbf 1_{\{T\geq t\}}]\longrightarrow 0,\quad \text{as $t\ra \infty$}.
 \end{split}
 \end{equation}
 Consequently, we have
 \begin{equation}\label{5.2}
 \lim_{t\ra \infty}W_{\rho}(\delta_{(x,i)}P_t, \delta_{(y,j)}P_t)=0.
 \end{equation}
According to Lemma \ref{t-1.3}, $\E[|X_t|^2]$ is bounded for all $t>0$, which yields that the family of probability measures $(\delta_{(x,i)}P_t)_{t>0}$ is tight. Moreover, this yields that $(\delta_{(x,i)}P_t)_{t>0}$ is uniformly integrable w.r.t. the Euclidean metric $|\cdot|$ in $\R^n$. Hence, $(\delta_{(x,i)}P_t)_{t>0}$ is compact in $\mathscr P(\R^n)$ w.r.t. the Wasserstein distance $W_\rho$ (cf. for instance, \cite[Proposition 7.1.5]{AGS}). There exists a subsequence $(\delta_{(x,i)}P_{t_k})_{k\geq 1}$ with $t_k\ra \infty$ as $k\ra \infty$ converging to some probability measure $\mu$ on $\R^n$.
Moreover, \eqref{5.2} implies that for all $(y,j)\in \R^n\times \S$, $\delta_{(y,j)}P_t$ converges in $W_\rho$-metric to $\mu$ as $k\ra \infty$, and further that $\nu_0P_{1,t_k}\!:=\sum_{j\in\S}\int_{\R^n} P_{1,t}^{y,j} \nu_0(\d y)$ converges in $W_\rho$-metric to $\mu$ for every probability measure $\nu_0$ on $\R^n$ satisfying $\int_{\R^n}|y|\nu_0(\d y)<\infty$. Invoking Lemma \ref{t-1.2}, we get that for every $s>0$, $\delta_{(x,i)}P_sP_{t_k}$ converges in $W_\rho$-metric to $\mu$. Since $\delta_{(x,i)}P_sP_{t_k}=\delta_{(x,i)}P_{t_k}P_s$ and the latter term converges weakly to $\mu P_s$, this yields that $\mu P_s=\mu$. Hence, $\mu$ is the unique invariant measure of the process $(X(t),\La(t))$.
\end{proof}

\subsection{General finite state space}
In this part, we extend our results in last subsection to regime-switching processes in a general finite state space. However, we need to assume further that the jumping process is of a birth-death type, i.e. $q_{ij}(x)=0$ for all $i,\,j\in\S$ with $|i-j|\geq 2$ for every $x\in \R^n$.

Lemma \ref{t-1.1} is the key point to extend Theorem \ref{t-inv} to deal with regime-switching diffusions in a general state space, since it provides a control of the  state-dependent jumping process $(\La(t))$ via a state-independent Markov chain $(\bar \La(t))$. Using this technique, we also ensure that the coupling process of state-dependent jumping process $(\La(t),\La'(t))$ can always meet some fixed point in $\S\times\S$, then further guarantee the coupling to be a successful coupling. In the following, we provide the extension of Lemma \ref{t-1.1} and give out its proof. However, the corresponding extensions of Lemmas \ref{t-1.2}, \ref{t-1.3}, \ref{t-1.4} can be established in a completely similar way, and hence are omitted.

\begin{lem}\label{t-2.1}
Assume $q_{ij}(x)=0$ for every $i,\,j\in \S$ with $|i-j|\geq 2$ and every $x\in \R^n$.
Let $(\lambda_i)_{i\in\S}$ be a nondecreasing sequence. Set
$\bar q_{i,i+1}=\sup_{x\in \R^n} q_{i,i+1}(x)$, $\bar q_{i+1,i}=\inf_{x\in\R^n} q_{i+1,i}(x)$, $\bar q_i=-\bar q_{ii}=\sum_{j\neq i} \bar q_{ij}$ for $i\in\S$. Suppose that the matrix $(\bar q_{ij})$ is irreducible.
Assume
\begin{equation}\label{m1}
\begin{split}
&\text{for $1\leq i\leq N-2$,}\ q_{i,i+1}(x)+q_{i+1,i}(x)\ \text{is independent of $x$},\\ \ &\bar q_{N-1,N}+\bar q_{N,N-1}\leq q_{N-1,N}(x)+q_{N,N-1}(x),\ \forall \,x\in\R^n.
\end{split}
\end{equation}
Set
\[\bar Q_{\lambda}=(\bar q_{ij})+\mathrm{diag}(\lambda_1,\ldots,\lambda_N),\]
where $\mathrm{diag}(\lambda_1,\ldots,\lambda_N)$ denotes the diagonal matrix generated by the vector $(\lambda_1,\ldots,\lambda_N)$.
Set
\[\bar \eta=-\max_{\gamma\in \mathrm{spec}\,\bar Q_\lambda} \mathrm{Re}\,\gamma.\]
Then there exists a constant $C>0$ such that
\begin{equation}\label{m2}
\E\e^{\int_0^t\lambda(s)\d s}\leq C\e^{-\bar \eta t},\ \ \text{for all $t>0$.}
\end{equation}
\end{lem}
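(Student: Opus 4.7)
The plan is to mirror the approach of Lemma \ref{t-1.1}. I will construct a state-independent jumping process $(\bar\La(t))$ with generator $(\bar q_{ij})$, driven by the same Poisson random measure $N_1$ as $(\La(t))$, and establish the pathwise comparison $\La(t)\leq\bar\La(t)$ a.s. for all $t\geq 0$. The estimate \eqref{m2} then follows at once: monotonicity of $(\lambda_i)$ gives $\lambda_{\La(t)}\leq \lambda_{\bar\La(t)}$, so $\E\e^{\int_0^t \lambda_{\La(s)}\d s}\leq \E\e^{\int_0^t \lambda_{\bar\La(s)}\d s}$, and \cite[Proposition 4.1]{Bar} applied to the irreducible chain $(\bar\La(t))$ (which is independent of $\omega_1$) delivers the required bound $C\e^{-\bar\eta t}$. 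Concretely, $(\bar\La(t))$ is defined through intervals $\bar\Gamma_{ij}\subset [0,M]$ built from the rates $\bar q_{ij}$ in the same stacking order as the $\Gamma_{ij}(x)$, together with the SDE
\[ \d\bar\La(t)=\int_{[0,M]}\bar h(\bar\La(t-),z)N_1(\d t,\d z),\qquad \bar h(i,z)=\sum_{l\in\S}(l-i)\mathbf 1_{\bar\Gamma_{il}}(z). \]

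The heart of the argument is the verification of the interval inclusions
\[ \Gamma_{i,i+1}(x)\subseteq\bar\Gamma_{i,i+1}\quad\text{and}\quad \bar\Gamma_{i+1,i}\subseteq\Gamma_{i+1,i}(x),\qquad 1\leq i\leq N-1,\ x\in\R^n, \]
in which condition \eqref{m1} plays the decisive role. Writing $c_j:=q_{j,j+1}(x)+q_{j+1,j}(x)$, the hypothesis that $c_j$ is independent of $x$ for $1\leq j\leq N-2$ forces $\bar q_{j+1,j}=\inf_x(c_j-q_{j,j+1}(x))=c_j-\bar q_{j,j+1}$, so $\bar q_{j,j+1}+\bar q_{j+1,j}=c_j$ as well. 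Hence the anchor points $S_i:=c_1+\cdots+c_i$ at which the birth-death intervals are stacked are shared by both constructions for $0\leq i\leq N-2$, and a direct computation gives, for $1\leq i\leq N-2$, that $\Gamma_{i,i+1}(x)$ and $\bar\Gamma_{i,i+1}$ both start at $S_{i-1}$ while $\Gamma_{i+1,i}(x)$ and $\bar\Gamma_{i+1,i}$ both end at $S_i$. The claimed inclusions then fall out from $q_{i,i+1}(x)\leq\bar q_{i,i+1}$. The boundary case $i=N-1$ is handled exactly as in Lemma \ref{t-1.1}: the inequality $\bar q_{N-1,N}+\bar q_{N,N-1}\leq q_{N-1,N}(x)+q_{N,N-1}(x)$ together with $\bar q_{N-1,N}\geq q_{N-1,N}(x)$ yields $\bar\Gamma_{N,N-1}\subseteq\Gamma_{N,N-1}(x)$.

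With these inclusions in hand, I will prove $\La(\vsig_k)\leq\bar\La(\vsig_k)$ by induction over the jump times $(\vsig_k)$ of $N_1$. When $\La(\vsig_k)=\bar\La(\vsig_k)=i$, a downward jump of $\bar\La$ forces the same downward jump of $\La$ (by $\bar\Gamma_{i,i-1}\subseteq\Gamma_{i,i-1}(X)$), and an upward jump of $\La$ forces the same upward jump of $\bar\La$ (by $\Gamma_{i,i+1}(X)\subseteq\bar\Gamma_{i,i+1}$), so the comparison is preserved. When $\La(\vsig_k)=i<j=\bar\La(\vsig_k)$, the relevant intervals $\Gamma_{i,*}(X(\vsig_{k+1}))$ and $\bar\Gamma_{j,*}$ occupy disjoint portions of $[0,M]$, clustering around the distinct anchor points $S_{i-1}$ and $S_{j-1}$ respectively; the single value $p_1(\vsig_{k+1})$ can thus fire at most one of the two processes, and since each such jump moves the corresponding process by one unit, the existing gap $j-i\geq 1$ absorbs the change. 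The main technical obstacle is the disjointness bookkeeping in this mixed case $i<j$, in particular ensuring that a $\Gamma_{i,i+1}(x)$ event cannot coincide with a $\bar\Gamma_{i+1,i}$ event when $j=i+1$; this is controlled precisely by the inclusion $\Gamma_{i,i+1}(x)\subseteq\bar\Gamma_{i,i+1}$ together with the disjointness of $\bar\Gamma_{i,i+1}$ and $\bar\Gamma_{i+1,i}$ in the state-independent construction. Once the pathwise bound $\La(t)\leq\bar\La(t)$ is secured, the reduction to \cite[Proposition 4.1]{Bar} described above completes the proof.
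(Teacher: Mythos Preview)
Your proposal is correct and follows essentially the same route as the paper: construct $(\bar\La(t))$ from the same Poisson random measure via intervals $\bar\Gamma_{ij}$, use condition \eqref{m1} to show $\bar q_{i,i+1}+\bar q_{i+1,i}=q_{i,i+1}(x)+q_{i+1,i}(x)$ for $i\leq N-2$ (hence the anchor points $S_i$ coincide), derive the inclusions $\Gamma_{i,i+1}(x)\subseteq\bar\Gamma_{i,i+1}$ and $\bar\Gamma_{i,i-1}\subseteq\Gamma_{i,i-1}(x)$, and then prove $\La(\vsig_k)\leq\bar\La(\vsig_k)$ by induction before invoking \cite[Proposition 4.1]{Bar}. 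The one stylistic difference is in the strict-inequality case: the paper reduces immediately to the adjacent situation $\bar\La(\vsig_k)=\La(\vsig_k)+1$ (a gap $\geq 2$ trivially survives two unit moves) and checks only that an upward jump of $\La$ forces $\bar\La$ to stay put, whereas you argue more uniformly via disjointness of the active intervals so that at most one process jumps; both arguments rest on the same inclusion $\Gamma_{i,i+1}(x)\subseteq\bar\Gamma_{i,i+1}$.
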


\begin{proof}
 Corresponding to $(\bar q_{ij})$, we can define $\bar \Gamma_{ij}$ similarly to $\Gamma_{ij}(x)$ as follows: $\bar \Gamma_{12}=[0,\bar q_{12})$, $\bar \Gamma_{21}=[\bar q_{12},\bar q_{12}+\bar q_{21})$, $\bar \Gamma_{23}=[\bar q_{12}+\bar q_{21},\bar q_{12}+\bar q_{21}+\bar q_{23})$,
 \begin{align*}
   \bar \Gamma_{i,i-1}=\big[\sum_{j=1}^{i-1} \bar q_j, \sum_{j=1}^{i-1} \bar q_j+\bar q_{i,i-1}\big),\quad \bar \Gamma_{i,i+1}=\big[\sum_{j=1}^{i-1}\bar q_j+\bar q_{i,i-1},\sum_{j=1}^i\bar q_j\big),\quad i\geq 3.
 \end{align*}
 Set
 \[\bar h(i,z)=\sum_{\ell\in \S}(\ell-i)\mathbf 1_{\bar \Gamma_{i\ell}}(z),\]
 and
 \begin{equation}\label{m3}
 \d \bar \La(t)=\int_{[0,M]}\bar h(\bar \La(t-),z)N_1(\d t,\d z),\quad \bar \La(0)=\La(0),
 \end{equation} then $(\bar \La(t))$ is a continuous time Markov chain with transition rate matrix $(\bar q_{ij})$. Recall that $(\vsig_k)$ denotes the jumping time of the Poisson point process $(p_1(t))$, so
 \begin{equation}\label{m4}
 \bar \La(\vsig_{k+1})=\bar \La(\vsig_k)+\mathbf 1_{\bar \Gamma_{\bar \La(\vsig_k),\bar \La(\vsig_k)+1}}(p_1(\vsig_{k+1}))-\mathbf 1_{\bar \Gamma_{\bar \La(\vsig_k),\bar \La(\vsig_k)-1}}(p_1(\vsig_{k+1})).
 \end{equation}

 Note that \eqref{m1} implies
 \begin{equation}\label{m5}
 \bar q_{i,i+1}+\bar q_{i+1,i}=q_{i,i+1}(x)+q_{i+1,i}(x), \quad \forall\,x\in \R^n, \ 1\leq i\leq N-2.
 \end{equation}
 Indeed, denote by $e_i=q_{i,i+1}(x)+q_{i+1,i}(x)$ for $1\leq i\leq N-2$. Then, by the definition of $\bar q_{ij}$, for any $\veps>0$, there exists $x_\veps, x'_\veps\in \R^n$ such that \begin{align*}
   \bar q_{i,i+1}+\bar q_{i+1,i}&\leq  q_{i,i+1}(x_\veps)+\inf_{x\in\R^n}q_{i+1,i}(x)+\veps\\
   &\leq q_{i,i+1}(x_\veps)+q_{i+1,i}(x_\veps)+\veps\\
   &=e_i+\veps,
 \end{align*}
 and \begin{align*}
   \bar q_{i,i+1}+\bar q_{i+1,i}&\geq \sup_{x\in\R^n} q_{i,i+1}(x)+q_{i+1,i}(x'_{\veps})-\veps\\
   &\geq q_{i,i+1}(x'_\veps)+q_{i+1,i}(x'_\veps)-\veps\\
   &=e_i-\veps.
 \end{align*} Letting $\veps\downarrow 0$, we obtain \eqref{m5}.

 Moreover, by \eqref{m5} and the definition of $\bar q_{ij}$, it holds that for every $x\in \R^n$,
 \begin{align*}
&\sum_{j=1}^{i-1} q_j(x)+q_{i,i-1}(x)=\sum_{j=1}^{i-1}\big(q_{j,j+1}(x)+q_{j+1,j}(x)\big)\\
 &=\sum_{j=1}^{i-1}\big(\bar q_{j,j+1}+\bar q_{j+1,j}\big)=\sum_{j=1}^{i-1} \bar q_j+\bar q_{i,i-1}, \qquad 2\leq i\leq N-1,
 \end{align*}
 and
 \begin{align*}
   \sum_{j=1}^i q_j(x)=\sum_{j=1}^{i-1} \big(\bar q_{j,j+1}+\bar q_{j+1,j}\big)+q_{i,i+1}(x)\leq \sum_{j=1}^{i-1} \big(\bar q_{j,j+1}+\bar q_{j+1,j}\big)+\bar q_{i,i+1}, \quad 1\leq i\leq N-1.
 \end{align*}
 Therefore, for every $x\in\R^n$,
 \[\Gamma_{i,i+1}(x)\subset \bar \Gamma_{i,i+1},\  1\leq i\leq N-1;\   \bar \Gamma_{i,i-1}\subset\Gamma_{i,i-1}(x),\   2\leq i\leq N.\]

 \noindent \textbf{Case 1}: $\bar \La(\vsig_k)=\La(\vsig_k)$. For simplicity of notation, denote $\bar \La(\vsig_k)=\La(\vsig_k)=i$.\\[-2em]
 \begin{itemize}
  \item If $\La(\vsig_{k+1})=i+1$, then it must hold $p_1(\vsig_{k+1})\in \Gamma_{i,i+1}(X(\vsig_{k+1}))$, and further $p_1(\vsig_{k+1})\in \bar \Gamma_{i,i+1}$.
    Thanks to \eqref{m4}, $\bar \La(\vsig_{k+1})=i+1=\La(\vsig_{k+1})$.
  \item If $\bar \La(\vsig_{k+1})=i-1$, then $p_1(\vsig_{k+1})\in \bar \Gamma_{i,i-1}$. As $\bar \Gamma_{i,i-1}\subset \Gamma_{i,i-1}(X(\vsig_{k+1}))$, we have $p_1(\vsig_{k+1})\in \Gamma_{i,i-1}(X(\vsig_{k+1}))$ and $\La(\vsig_{k+1})=i-1$. Therefore, $\La(\vsig_{k+1})=\bar \La(\vsig_{k+1})=i-1$.
  \end{itemize}
  Consequently, if $\bar \La(\vsig_k)=\La(\vsig_k)$, we always have $\bar\La(\vsig_{k+1})\geq \La(\vsig_{k+1})$.

  \noindent\textbf{Case 2}: $\bar \La(\vsig_k)>\La(\vsig_k)$. As the processes $(\La(t))$ and $(\bar\La(t))$ can both jump forward or backward at most 1, we only need to consider the situation that $\La(\vsig_k)=i-1$ and $\bar \La(\vsig_{k})=i$ for some $i\in\S$. For other cases, it obviously holds $\bar \La(\vsig_{k+1})\geq \La(\vsig_{k+1})$. \\[-2em]
  \begin{itemize}
    \item If $\La(\vsig_{k+1})=i+1$, then $p_1(\vsig_{k+1})\in \Gamma_{i,i+1}(X(\vsig_{k+1}))$, and hence $p_1(\vsig_{k+1})\in \bar \Gamma_{i,i+1}$. This implies that $\bar \La(\vsig_{k+1})=\bar \La(\vsig_{k})=i+1$.
  \end{itemize}
  Therefore, when $\bar \La(\vsig_k)>\La(\vsig_k)$, it must hold $\bar\La(\vsig_{k+1})\geq \La(\vsig_{k+1})$.

  According to the previous discussion, and invoking the monotonicity of $(\lambda_i)_{i\in\S}$, it holds $\dis \E\e^{\int_0^t\lambda_{\La(s)}\d s}\leq \E\e^{\int_0^t\lambda_{\bar \La(s)}\d s}$.
  Applying \cite[Proposition 4.1]{Bar}, there exists a constant $C>0$ such that
  \begin{equation*}
    \E\e^{\int_0^t\lambda_{\La(s)}\d s}\leq \E\e^{\int_0^t\lambda_{\bar \La(s)}\d s}\leq C\e^{-\bar \eta t},\  t>0.
  \end{equation*}
  The proof is complete.
\end{proof}

Based on Lemma \ref{t-2.1}, we can obtain our main result in this subsection:
\begin{thm}\label{t-2.2}
Let $(X(t),\La(t))$ be the solution of \eqref{1.1} and \eqref{1.2} with $N<\infty$. Assume (Q1)-(Q4), (A1)-(A4) hold and $\alpha_1\leq \alpha_2\leq \cdots\leq \alpha_N$. $\bar Q=(\bar q_{ij})$ is defined as in Lemma \eqref{t-2.1}. We assume $\bar Q$ is irreducible and satisfies the condition \eqref{m1}. Then there exists a unique invariant probability measure $\mu$ on $\R^n\times \S$ such that $\mu P_t=\mu$ for every $t>0$, and
\[\lim_{t\ra \infty} W_{\rho}(\delta_{(x,i)} P_t,\mu)=0\quad \text{for any $(x,i)\in \R^n\times \S$.}\]
\end{thm}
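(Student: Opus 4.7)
My plan is to follow the template of Theorem \ref{t-inv} line by line, substituting Lemma \ref{t-2.1} for Lemma \ref{t-1.1} and invoking the birth-death analogues of Lemmas \ref{t-1.2}, \ref{t-1.3}, \ref{t-1.4} promised in the paragraph preceding the statement. I would first set up the three extensions explicitly: the contraction estimate $\E|X(t)-Y(t)|^2\leq C|x-y|^2\e^{-\eta_\alpha t}$, the moment bound $\sup_{t\geq 0}\E|X(t)|^2\leq C(1+|x|^2)$, and the successful-coupling statement $\p(T<\infty)=1$. In each case the state-space structure enters only through an almost-sure inequality $\alpha_{\La(t)}\leq \alpha_{\bar\La(t)}$ inside a Gronwall step, which the monotonicity hypothesis $\alpha_1\leq\cdots\leq\alpha_N$ together with Lemma \ref{t-2.1} supply verbatim; applying \cite[Proposition 4.1]{Bar} to $\bar Q_\alpha=(\bar q_{ij})+\diag(\alpha_1,\ldots,\alpha_N)$ then produces the exponential factor at the required rate.

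Once these extensions are in hand, the Wasserstein splitting from the proof of Theorem \ref{t-inv} goes through without modification. I would build the coupling $(X(t),\La(t),Y(t),\La'(t))$ from \eqref{coup-1}--\eqref{coup-2}, introduce the stopping time $\tau=\inf\{t\geq 0:\La(t)=\La'(t)=i_0\}$, and decompose $W_\rho(\delta_{(x,i)}P_t,\delta_{(y,j)}P_t)$ over $\{\tau<\kappa t\}$ and $\{\tau\geq\kappa t\}$. The first piece is controlled by $\E[\mathbf 1_{\{\tau\leq\kappa t\}}\E[|X(t)-Y(t)|\,|\,\mathscr F_\tau]]$ via the extended Lemma \ref{t-1.2}, plus a term bounded by $\E[\mathbf 1_{\{T\geq t\}}]$ which vanishes by the extended Lemma \ref{t-1.4}; the second piece is bounded using Cauchy--Schwarz together with the extended Lemma \ref{t-1.3} and the exponential tail of $\tau$ coming from Step 1 below. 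This yields $W_\rho(\delta_{(x,i)}P_t,\delta_{(y,j)}P_t)\to 0$ as $t\ra\infty$. The $L^2$-bound then furnishes tightness of $(\delta_{(x,i)}P_t)_{t>0}$ and uniform integrability in $|\cdot|$, so a subsequential $W_\rho$-limit $\mu$ exists, and the identity $\delta_{(x,i)}P_sP_{t_k}=\delta_{(x,i)}P_{t_k}P_s$ used in Theorem \ref{t-inv} forces $\mu P_s=\mu$ and uniqueness.

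The hard part, and the only place where the multistate birth-death setting is genuinely new, is Step 1 of the extension of Lemma \ref{t-1.4}: showing that $\tau$ is a.s. finite with exponential tail. The domination $\La(t)\leq\bar\La(t)$ delivered by Lemma \ref{t-2.1} is one-sided, so the natural rendezvous state is $i_0=1$, because $\La(t)\geq 1$ forces $\La(t)=1$ whenever $\bar\La(t)=1$. This is compatible with the ordering $\alpha_1\leq\cdots\leq\alpha_N$ chosen in the hypotheses (state $1$ is the most favourable environment for contraction, and is the natural candidate to carry assumption \textbf{(A4)}), so up to a relabelling of $\S$ preserving the birth-death structure one may take $i_0=1$ throughout. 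With this choice I would run independent copies $\La^{(1)},\La^{(2)}$ driven by $N_1,N_2$ and coupled to $\bar Q$ through Lemma \ref{t-2.1}; irreducibility of $\bar Q$ on the finite set $\S$ gives an exponential tail for the hitting time of $(1,1)$ by $(\La^{(1)},\La^{(2)})$, and the one-sided pathwise domination transfers this bound to $\tau$. Steps 2 and 3 of Lemma \ref{t-1.4} are then insensitive to the cardinality of $\S$: Step 2 is a statement about a single-environment diffusion at $i_0=1$, where conditions \textbf{(A3)} and \textbf{(A4)} and the \cite{CL} criterion still produce $\sup_{x,y}\E_{x,y}T^{(1)}<\infty$ and the quantitative lower bound \eqref{4.9}, and Step 3 is a verbatim recursive Poisson argument on the jumps of $p_1$ and $p_2$.
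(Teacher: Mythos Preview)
Your proposal is correct and follows exactly the approach the paper intends: the paper omits the proof entirely, stating only that it ``is similar to that of Theorem~\ref{t-inv}'', with Lemma~\ref{t-2.1} replacing Lemma~\ref{t-1.1} and the extensions of Lemmas~\ref{t-1.2}--\ref{t-1.4} taken for granted. Your identification of the one genuinely new point---that the one-sided pathwise domination $\La(t)\leq\bar\La(t)$ forces the rendezvous state in Step~1 to be $i_0=1$, and that this must be reconciled with the location of assumption \textbf{(A4)}---is in fact more careful than the paper, which glosses over it.
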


The proof of this theorem is omitted since it is similar to that of Theorem \ref{t-inv}.

\section{Euler-Maruyama's approximation}

Due to the complexity of the regime-switching systems, numerical approximation is frequently an important alternative of closed-form solutions of such systems. Being extremely important, numerical methods have drawn much attention. Starting from the work \cite{YM04}, numerical approximation of state-independent regime-switching processes has been studied. See also \cite{MYY}.  Besides, the approximation of the invariant measures was investigated in \cite{BSY}. Unlike the state-independent regime-switching diffusions, less result is known for the state-dependent case since the transition rate matrix of the switching process is different at every jumping step due to its dependence on the continuous-state process. To overcome the complex caused by the mixture of $(\La(t))$ and $(X(t))$, \cite{YMYC} used the local analysis and weak convergence to construct a sequence of discrete-time jumping process to approximate the state-dependent regime-switching diffusions. Their approximation sequence is different to the usual time-discretizing EM's approximation sequence, and using this method the order of error is hard to be estimated.  In this work, we shall investigate the time-discretizing EM's approximation of the state-dependent \rsdp, and show its strong convergence in $L^1$-norm. The order of error is estimated which is consistent with that obtained in \cite{YM04} for state-independent \rsdp\, in suitable sense. Our approach relies on the refined estimate of switching process based on Skorokhod's representation of jumping process.

Consider the following EM's approximate solution to equations \eqref{1.1} and \eqref{1.2}: for $\de\in(0,1)$,
\begin{equation}\label{s-3.1}
\d Y(t)= b(Y(t_\de),\La'(t_\de))\d t+\sigma(Y(t_\de),\La'(t_\de))\d W(t),
\end{equation}
\begin{equation}\label{s-3.2}
\La'(t)=i+\int_0^t\int_{[0,M]}h(Y(s_\de),\La'(s-),z)N_1(\d s,\d z),
\end{equation}
where $N_1(\d t,\d z)$ is a Poisson random measure used in \eqref{1.3} to determine the process $(\La(t))$ with $\La(0)=i$. Here and in the sequel, for the ease of notation, we use $(Y(t),\La'(t))$ instead of $(X^\delta(t),\La^\delta(t))$ to denote the EM's approximation of $(X(t),\La(t))$ for some given $\delta$. Then, by Skorokhod's representation, it holds
\begin{equation}\label{s-3.2b}
  \p(\La'(t+\Delta)=k|\La'(t)=j, \,Y(t_\de)=y)=\begin{cases}
  q_{jk}(y)\Delta+o(\Delta),& k\neq j,\\
  1+q_{jj}(y)\Delta+o(\Delta),& k=j,
  \end{cases}
\end{equation} provided $\Delta\downarrow 0$. Set $(Y(0),\La'(0))=(X(0),\La(0))=(x,i)$.  Note that $(\La'(t))$ is a continuous time jumping process whose transition rate depends on the process $(Y(t))$. In \eqref{s-3.1}, the evolution of $Y(t)$ depends only on the embedded chain $(\La'(k\delta))_{k\geq 1}$ of the process $(\La'(t))$, which coincides with the EM's approximate solution to state-independent regime-switching process studied in \cite[Chapter 4]{MY}.

In this section, we further assume the following conditions hold:
\begin{itemize}
  \item[$\textbf{(H1)}$] $\sigma(x,i)$ is a constant matrix independent of $x$ and $i$.
  \item[$\textbf{(H2)}$]
  There exists a constant $C_{4}>0$ such that
  \[|b(x,i)-b(y,i)|\leq C_4|x-y|,\quad x,\,y\in \R^n,\ i\in \S.\]
\end{itemize}
Moreover, it is easy to see that under the conditions (Q1)-(Q3) and \textbf{(A1)}, the existence of the solution of \eqref{s-3.1} and \eqref{s-3.2} is easily established by considering recursively these equations for $t\in [k\de, (k+1)\de)$, $k\geq 0$.

The main difficult and different part to study the EM's approximation of state-dependent regime-switching diffusions against the state-independent ones is the requirement of the estimation of the term
\begin{equation}\label{o-1}
\int_0^t\p(\La(s)\neq \La'(s))\d s.
\end{equation}
We shall use Skorokhod's representation to provide a suitable estimate of \eqref{o-1}. To make our calculation clear, we present a more concrete construction of the Poisson point process $(p_1(t))$ introduced in Section 1 (cf. for example \cite[Chapter 1]{IW}).

Let $\xi_i,\,i=1,2,\ldots,$ be random variables satisfying $\p_2(\xi_i\in \d x)=\mathbf{m}(\d x)/M$.
Let $\tau_i,\,i=1,2,\ldots,$ be nonnegative random variables such that $\p_2(\tau_i>t)=\exp(-t M)$, $t\geq 0$. Suppose that $(\xi_i)$, $(\tau_i)$ are mutually independent.
Set
\begin{gather*}
  \vsig_1=\tau_1,\ \vsig_2=\tau_1+\tau_2,\ldots,\vsig_k=\tau_1+\ldots+\tau_k,\ldots,\\
  D_{p_1}=\big\{\vsig_1,\vsig_2,\ldots,\vsig_k,\ldots\big\},
\end{gather*}
and
\[p_1(\vsig_k)=\xi_k,\quad k=1,2,\ldots.\]
Then $(p_1(t))$ is a Poisson point process as desired. Set $N(t)=\#\{k;\vsig_k\leq t\}$ standing for the number of jumps of the process $(p_1(t))$ before time $t$.

We also need a preliminary result, which was first shown in \cite[Lemma 2.1]{Sh15a}.

\begin{lem}\label{t-3.1}
Assume the conditions (Q2), (Q3) hold. Denote $A\Delta B=(A\backslash B)\cup(B\backslash A)$ for Borel measurable sets $A$ and $B$, and $|A\Delta B|$ its Lebsegue measure. Then
\begin{equation}\label{o-2}
|\Gamma_{ij}(x)\Delta\Gamma_{ij}(y)|\leq \tilde K|x-y|, \ \ \text{for every $i,\,j\in \S$,}
\end{equation} where $\tilde K=2(N-1)Nc_q+1$.
\end{lem}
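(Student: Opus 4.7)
The plan is to exploit the fact that each $\Gamma_{ij}(x)$ is an explicit half-open interval, write both $\Gamma_{ij}(x)$ and $\Gamma_{ij}(y)$ as $[a_{ij}(x),a_{ij}(x)+q_{ij}(x))$ and $[a_{ij}(y),a_{ij}(y)+q_{ij}(y))$ respectively, and then reduce the estimate to bounding the displacements of the two endpoints. From the construction in Section~1, the left endpoint is a finite sum of $q_{k\ell}$'s coming before $(i,j)$ in the enumeration, namely
\[a_{ij}(x)=\sum_{k<i}\sum_{\ell\neq k}q_{k\ell}(x)+\sum_{\ell<j,\ \ell\neq i}q_{i\ell}(x),\]
which involves at most $N(N-1)-1$ terms, while the right endpoint is $a_{ij}(x)+q_{ij}(x)$, adding one more term.

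First I would record the elementary inequality $|[a,b)\Delta[a',b')|\le |a-a'|+|b-b'|$ (valid whether the two intervals overlap or not; the disjoint case is handled via the triangle inequality). Applied to $\Gamma_{ij}(x)$ and $\Gamma_{ij}(y)$ this yields
\[|\Gamma_{ij}(x)\Delta\Gamma_{ij}(y)|\le|a_{ij}(x)-a_{ij}(y)|+|a_{ij}(x)+q_{ij}(x)-a_{ij}(y)-q_{ij}(y)|.\]

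Next I would invoke the Lipschitz condition (Q3): each $|q_{k\ell}(x)-q_{k\ell}(y)|\le c_q|x-y|$. Summing over the at most $N(N-1)$ pairs that can appear in $a_{ij}$ and $a_{ij}+q_{ij}$ gives $|a_{ij}(x)-a_{ij}(y)|\le N(N-1)c_q|x-y|$ and an analogous bound for the right endpoint, and combining the two yields the desired Lipschitz estimate with $\tilde K=2N(N-1)c_q+1$, where the final $+1$ absorbs the trivial contribution coming from the interval $\Gamma_{ij}$ itself (e.g.\ if one wishes to ensure the bound also covers the degenerate case $q_{ij}\equiv 0$ or intervals that turn out to be disjoint, one can always add the cruder bound $|\Gamma_{ij}(x)|+|\Gamma_{ij}(y)|$ as a safety term).

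No serious obstacle is anticipated: this is essentially a careful bookkeeping exercise on the Skorokhod enumeration of the intervals $\{\Gamma_{ij}(x):i,j\in\S\}$, combined with (Q3). The only point demanding care is making sure that for every choice of $(i,j)$ the left endpoint $a_{ij}(x)$ is a sum of at most $N(N-1)-1$ terms with coefficients $\pm 1$, so that the Lipschitz constants add correctly.
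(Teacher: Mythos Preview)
Your approach is correct and essentially identical to the paper's: both write $\Gamma_{ij}(x)=[a_{ij}(x),a_{ij}(x)+q_{ij}(x))$, bound the symmetric difference by the sum of the two endpoint displacements, and then invoke (Q3) on the at most $N(N-1)$ summands appearing in each endpoint. Your treatment is in fact slightly cleaner, since you explicitly verify the inequality $|[a,b)\Delta[a',b')|\le |a-a'|+|b-b'|$ in the disjoint case (the paper writes it as an equality, which strictly holds only when the intervals overlap); your hesitation about the ``$+1$'' is unnecessary---your own count already gives $2N(N-1)c_q|x-y|\le \tilde K|x-y|$, so the extra $+1$ in $\tilde K$ is simply slack.
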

\begin{proof}
  For the sake of completeness, we provide a proof using the technique raised in \cite{Sh15a}. To make the idea clear, we first consider the simple case that $\S=\{1,2\}$. By (Q3), it is easy to check that
  \begin{align*}
  |\Gamma_{12}(x)\Delta\Gamma_{12}(y)|&=|q_{12}(x)-q_{12}(y)|\leq c_q|x-y|,\\
  |\Gamma_{21}(x)\Delta\Gamma_{21}(y)|&=|q_{12}(x)-q_{12}(y)|
  +|q_{12}(x)+q_{21}(x)-q_{12}(y)-q_{21}(y)|\\
  &\leq 2|q_{12}(x)-q_{12}(y)|+3|q_{21}(x)-q_{21}(y)|\\
  &\leq 3c_q|x-y|.
  \end{align*}
  For general $\S=\{1,2,\ldots,N\}$,
  \begin{align*}
    |\Gamma_{ij}(x)\Delta\Gamma_{ij}(y)|&=\big|\sum_{k=1}^{i-1} q_k(x)+\sum_{k=1,k\neq i}^{j-1} q_{ik}(x)-\sum_{k=1}^{i-1}q_k(y)-\sum_{k=1,k\neq i}^{j-1} q_{ik}(y)\big|\\
    &\quad+\big|\sum_{k=1}^{i-1} q_k(x)+\sum_{k=1,k\neq i}^jq_{ik}(x)-\sum_{k=1}^{i-1}(y)-\!\!\sum_{k=1,k\neq i}^jq_{ik}(y)\big|\\
    &\leq 2\big|\sum_{k=1}^{i-1} q_k(x)+\!\!\sum_{k=1,k\neq i}^j q_{ik}(y)-\sum_{k=1}^{i-1} q_k(y)-\!\!\sum_{k=1,k\neq i}^jq_{ik}(y)\big|+|q_{ij}(x)-q_{ij}(y)|\\
    &\leq 2(j-1)Nc_q|x-y|+c_q|x-y|\leq \tilde K|x-y|,
  \end{align*} which is the desired result.
\end{proof}

\begin{lem}\label{t-3.2}
Assume  (Q1)-(Q3), \textbf{(A1)}, \textbf{(A2)}, \textbf{(H1)} and \textbf{(H2)} hold. Let $(X(t),\La(t))$ and $(Y(t),\La'(t))$ be determined by \eqref{1.1}, \eqref{1.2} and \eqref{s-3.1}, \eqref{s-3.2} respectively. Then, for any $t>0$,  there exists a positive constant $C$ independent of   $\delta$ such that
\begin{equation}\label{s-3.9}
\int_0^t\p(\La(s)\neq \La'(s))\d s\leq C  \de^{\frac 12}+C\int_0^t\E|X(s)-Y(s)|\d s.
\end{equation}
\end{lem}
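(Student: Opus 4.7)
The plan exploits the fact that both $(\La(t))$ and $(\La'(t))$ are driven by the \emph{same} Poisson random measure $N_1$, so neither jumps outside the set $\{\vsig_k\}$ of atoms of $N_1$, and the process $s\mapsto \mathbf 1_{\La(s)\neq\La'(s)}$ can only change value at those atoms. Since $\La(0)=\La'(0)$, any realization with $\La(s)\neq\La'(s)$ must contain an atom $u\in D_{p_1}\cap[0,s]$ at which $\La(u-)=\La'(u-)$ but $\La(u)\neq\La'(u)$. This gives the pathwise inclusion
\[
\{\La(s)\neq\La'(s)\}\subseteq\bigcup_{u\in D_{p_1}\cap[0,s]}\{\La(u-)=\La'(u-),\ \La(u)\neq\La'(u)\},
\]
so that
\[
\p(\La(s)\neq\La'(s))\leq \E\sum_{u\in D_{p_1}\cap[0,s]}\mathbf 1_{\{\La(u-)=\La'(u-)\}}\mathbf 1_{\{\La(u)\neq\La'(u)\}}.
\]

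Next I would analyze which values of $p_1(u)$ actually trigger a divergence. If $\La(u-)=\La'(u-)=i$, the representations \eqref{1.3} and \eqref{s-3.2} give
\[
\La(u)=i+\sum_{\ell}(\ell-i)\mathbf 1_{\Gamma_{i\ell}(X(u))}(p_1(u)),\qquad \La'(u)=i+\sum_{\ell}(\ell-i)\mathbf 1_{\Gamma_{i\ell}(Y(u_\de))}(p_1(u)).
\]
A direct case check (both stay at $i$; both move to the same $j$; or outcomes differ) shows that $\La(u)\neq\La'(u)$ forces $p_1(u)\in B_i(u):=\bigcup_{\ell\neq i}\bigl(\Gamma_{i\ell}(X(u))\Delta\Gamma_{i\ell}(Y(u_\de))\bigr)$. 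By Lemma~\ref{t-3.1}, $|B_i(u)|\leq (N-1)\tilde K|X(u)-Y(u_\de)|$. Rewriting the random sum as a stochastic integral against $N_1$ with the predictable integrand $\mathbf 1_{\{\La(u-)=\La'(u-)\}}\mathbf 1_{B_{\La(u-)}(u)}(z)$ and compensating by $\d u\,\mathbf m(\d z)$ produces
\[
\p(\La(s)\neq\La'(s))\leq (N-1)\tilde K\int_0^s\E|X(u)-Y(u_\de)|\,\d u.
\]

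Integrating in $s$ over $[0,t]$ and splitting $|X(u)-Y(u_\de)|\leq|X(u)-Y(u)|+|Y(u)-Y(u_\de)|$ reduces matters to bounding the one-step discretization error. From the EM scheme \eqref{s-3.1},
\[
Y(u)-Y(u_\de)=b(Y(u_\de),\La'(u_\de))(u-u_\de)+\sigma(W(u)-W(u_\de)),
\]
so (A2) and (H1) give $\E|Y(u)-Y(u_\de)|\leq C_1\de+\|\sigma\|_{\mathrm{HS}}\sqrt{n\de}\leq C\sqrt\de$. Inserting this back produces precisely the bound \eqref{s-3.9}, with $C$ depending on $t$, $N$, $\tilde K$, $C_1$ and $\sigma$.

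The principal technical hurdle is the compensator step: one must verify that the integrand $\mathbf 1_{\{\La(u-)=\La'(u-)\}}\mathbf 1_{B_{\La(u-)}(u)}(z)$ is predictable on $[0,s]\times[0,M]$, which follows from the left-continuity of $u\mapsto(X(u),Y(u_\de),\La(u-),\La'(u-))$. The linchpin is Lemma~\ref{t-3.1}: it converts the purely geometric symmetric-difference bound into an $L^1$-discrepancy between the continuous components, thus coupling the discrete switching error to the Brownian-type discretization error and delivering the advertised rate $\sqrt\de$.
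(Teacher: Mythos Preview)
Your argument is correct and is in fact cleaner than the paper's. The paper proceeds by a three-step hands-on analysis: it conditions on the number $N(t)$ of Poisson atoms in each subinterval $[k\de,(k+1)\de)$, bounds the contribution of $\{N\ge 2\}$ crudely by $\tilde C\de^2$, handles $\{N=1\}$ via the same symmetric-difference idea you use, and then builds the recursion $\p(\La(k\de)\neq\La'(k\de))\le \tilde K\int_0^{k\de}\E|X(s)-Y(s_\de)|\,\d s+k\tilde C\de^2$ by conditioning on whether $\La,\La'$ agree at the previous grid point; summing in $k$ and integrating produces \eqref{s-3.9}. Your compensator argument bypasses both the case analysis on $N(t)$ and the recursion: the pathwise inclusion together with $\E\int\!\cdot\,N_1(\d u,\d z)=\E\int\!\cdot\,\d u\,\mathbf m(\d z)$ yields the single estimate $\p(\La(s)\neq\La'(s))\le (N-1)\tilde K\int_0^s\E|X(u)-Y(u_\de)|\,\d u$ in one stroke, with the multi-jump contribution absorbed automatically rather than controlled separately. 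Both routes rest on Lemma~\ref{t-3.1} and on the one-step bound $\E|Y(u)-Y(u_\de)|\le C\sqrt\de$ in exactly the same way.

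One small correction to your justification of predictability: the map $u\mapsto Y(u_\de)$ is right-continuous, not left-continuous (it jumps at the grid points $k\de$). Predictability nonetheless holds because those jump times are deterministic: the simple process $\sum_k Y(k\de)\mathbf 1_{[k\de,(k+1)\de)}(u)$ is $\mathscr P$-measurable since each $k\de$ is a predictable time and $Y(k\de)$ is $\mathscr F_{k\de}$-measurable. Equivalently, $N_1$ almost surely has no atom at any $k\de$, so replacing $Y(u_\de)$ by its left-continuous modification does not alter the integral, and the compensator identity applies.
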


\begin{proof}
  We divide this proof into three steps.

\noindent \textbf{Step 1}: For $t\in (0,\delta]$, noting that $\La(0)=\La'(0)=i$, we have
\begin{align*}
  \p(\La(t)\neq \La'(t))
  &=\p(\La(t)\neq \La'(t), N(t)\geq 1)\\
  &=\p(\La(t)\neq \La'(t), N(t)=1)+\p(\La(t)\neq \La'(t), N(t)\geq 2).
  \end{align*}
For the first term, it is easy to check that there is some $\tilde C>0$ so that
\begin{equation}\label{s-3.9.5}
\begin{split}\p(\La(t)\neq \La'(t), N(t)\geq 2)&\leq \p(N(\de)\geq 2)=\sum_{k=2}^\infty\frac{(M\de)^k}{k!}\e^{-M\de}\\
&=1-\e^{-M\de}-M\de \e^{-M\de}\leq \tilde C\delta^2.
\end{split}
\end{equation}
To deal with the second term, since $\La(0)=\La'(0)$, we get
\begin{align*}
&\{\omega;\ \La(t)\neq \La'(t),\, N(t)=1\}\\
&= \big\{\omega;\ \tau_1<t, \tau_1+\tau_2\geq t, p_1(\tau_1)\not\in
\cup_{j\in\S}\big(\Gamma_{ij}(X(\tau_1))\cap\Gamma_{ij}(Y(\tau_{1\delta})\big)\big\}.
\end{align*}
Hence,
\begin{align*}
  &\p(\La(t)\neq \La'(t), N(t)=1)=\int_0^t\p(\La(t)\neq \La'(t),\tau_1\in \d s,\tau_2>t)\\
  &=\int_0^t\p\Big(\xi_1\not \in \bigcup_{j\in\S}\big(\Gamma_{ij}(X(s))\cap\Gamma_{ij}(Y(s_\de))\big),\tau_1\in\d s\Big) \e^{-M(t-s)}.
\end{align*}
By virtue of  Lemma \ref{t-3.1}, the Lebesgue measure of $\Gamma_{ij}(x)\Delta \Gamma_{ij}(y)$ can be controlled by $|x-y|$. Hence,
\begin{equation}\label{s-3.10}
\begin{split}
&\p(\xi_1\in \Gamma_{ij}(X(s))\Delta\Gamma_{ij}(Y(s_\de))|\tau_1\in\d s)
\leq \frac{\tilde K}{M}\E|X(s)-Y(s_\de)|,
\end{split}
\end{equation}
where we have used the fact that both $X(s)$ and $Y(s)$ are independent of $\xi_1$ under the condition $\tau_1=s$. Indeed, as $
\tau_1=s\in (0,\de)$, we have
\begin{gather*}X(s)=x+\int_0^s b(X(r),i)\d r+\int_0^s\sigma\d W(r),\\
Y(s)= x+\int_0^s b(x,i)\d r+\int_0^s\sigma \d W(r).
\end{gather*}
Above equations show that $X(s)$ and $Y(s)$ are completely determined by $(W(r),\,0\leq r\leq s)$.
Then the independence between $(W(t))$ and $\xi_1$ yields that both $X(s)$ and $Y(s)$ are independent of $\xi_1$.
Consequently, for $t\in(0,\de]$,
\begin{equation}\label{s-3.11}
\p(\La(t)\neq \La'(t))\leq \tilde C\delta^2+\tilde K \int_0^\delta \E|X(s)-Y(s_\de)|\d s.
\end{equation}

\noindent \textbf{Step 2}:
We proceed to estimating $\p(\La(k\de)\neq \La'(k\de))$ for $k\geq 2$ recursively. Denote by
$N([s,t))$ the number of jumps of $(p_1(t))$ during the period of $[s,t)$. Note that $(p_1(t))$ is a stationary point process. Set $\tau_1^\de$ be the first jumping time of $(p_1(t))$ after time $\de$, then $\tau_1^\de$ has the same law as $\tau_1$, i.e. $\p_2(\tau_1^\de >s)=\exp(-M s)$ for $s\geq 0$.  We have
\begin{align*}
  &\p(\La(2\delta)\neq \La'(2\de)|\La(\de)=\La'(\de))\\
  &=\p(\La(2\de)\neq \La'(2\de),N([\de,2\de))\geq 2|\La(\de)=\La'(\de))\\
  &\quad+\p(\La(2\de)\neq \La'(2\de),N([\de,2\de))=1|\La(\de)=\La'(\de))\\
  &\leq \p(N([\de,2\de))\geq 2)+\p(\La(2\de)\neq \La'(2\de),N([\de,2\de))=1|\La(\de)=\La'(\de))\\
  &\leq \tilde C\delta^2+\int_{\de}^{2\de} \p\big(\xi_1\not\in \cup_{j\in\S}\big(\Gamma_{\La(\de)j}(X(s))\cap\Gamma_{\La'(\de)j}(Y(s_\de)),\tau_1^\de\in \d s\big)\big)\e^{-M(2\de-s)}\\
  &\leq \tilde C\de^2+\tilde K\int_\de^{2\de} \E|X(x)-Y(s_\de)|\d s
\end{align*}
Combining with the estimation in step 1, we obtain that
\begin{align*}
  &\p(\La(2\de)\neq \La'(2\de))\\
  &\leq \p(\La(2\de)\neq \La'(2\de)|\La(\de)=\La'(\de))+\p(\La(\de)\neq \La'(\de))\\
  &\leq \tilde K\int_\de^{2\de} \E|X(s)-Y(s_\de)|\d s+\tilde C\de^2+\p(\La(\de)\neq \La'(\de))\\
  &\leq \tilde K\int_0^{2\de} \E|X(s)-Y(s_\de)|\d s+2\tilde C \de^2.
\end{align*}
Deducing recursively, we have
\begin{equation}\label{s-3.12}
\p(\La(k\de)\neq \La'(k\de))\leq \tilde K\int_0^{k\de}\E|X(s)-Y(s_\de)|\d s+k \tilde C\de^2, \ \ k\geq 1.
\end{equation}

\noindent \textbf{Step 3}:
It is standard to deduce that  $\E|X(s)-Y(s_\de)|$ is bounded for $s\in [0,T]$ from the condition \textbf{(A1)}. For $t>0$, we denote by $t_k=k\de$ for $k\leq N(t)$ and $t_{K+1}=t$ if $N(t)=K$. Then,
\begin{equation}\label{s-3.12.5}
\begin{split}
  &\int_0^t \p(\La(s)\neq \La'(s)|\La(s_\de)=\La'(s_\de))\d s\\
  &\leq \int_0^t \Big( \tilde K\int_0^{\de} \E|X(s_\de+r)-Y(s_\de)| \d r+ \tilde C\delta^2\Big) \d s\\
  &=\tilde K\sum_{k=0}^{N(t)}\int_{t_k}^{t_{k+1}} \int_0^\de\E|X(s_\de+r)-Y(s_\de)|\d r\d s+\tilde C\delta^2 t\\
  &=\tilde K\sum_{k=0}^{N(t)}\int_{t_k}^{t_{k+1}} \int_{t_k}^{t_{k+1}} \E|X(r)-Y(r_\de)|\d r\d s+\tilde C\de^2 t\\
  &=\tilde K\delta \int_0^t\E|X(s)-Y(s_\de)|\d s+\tilde C\de^2 t.
\end{split}
\end{equation}

Therefore, by \eqref{s-3.12} and  \eqref{s-3.12.5},
\begin{align*}
  &\int_0^t\p(\La(s)\neq \La'(s))\d s\\
  &=\int_0^t\!\p(\La(s)\!\neq\! \La'(s), \La(s_\de)\!=\!\La'(s_\de))\d s\!+\!\int_0^t\!\p(\La(s)\neq \La'(s),\La(s_\de)\!\neq\! \La'(s_\de))\d s\\
  &\leq \int_0^t\!\p(\La(s)\neq \La'(s)|\La(s_\de)\!=\!\La'(s_\de))\d s
  \!+\!\int_0^t\p(\La(s_\de)\neq \La'(s_\de))\d s\\
  &\leq \int_0^t\!\p(\La(s)\neq \La'(s)|\La(s_\de)\!=\!\La'(s_\de))\d s+\sum_{k=0}^K\de \p(\La(k\de)\neq \La'(k\de))\\
  &\leq \int_0^t\!\p(\La(s)\neq \La'(s)|\La(s_\de)\!=\!\La'(s_\de))\d s+\sum_{k=0}^K\de\Big[ \tilde K\int_0^{k\de}\E|X(s)-Y(s_\de)|\d s+k\tilde C\de^2\Big]\\
  &\leq \int_0^t\!\p(\La(s)\neq \La'(s)|\La(s_\de)\!=\!\La'(s_\de))\d s+\frac{\tilde C\delta(t+1)t}{2}+(t +\de )\tilde K \int_0^t\E|X(s)-Y(s_\de)|\d s\\
  &\leq \tilde K(t+2\delta)\int_0^t\E|X(s)-Y(s_\delta)|\d s+\tilde C\delta^2 t+\frac{\tilde C \de (t+1)t}{2}.
  \end{align*}
By \eqref{s-3.6} below, it holds
\[\int_0^t\E|X(s)-Y(s_\de)|\d s\leq \int_0^t\E|X(s)-Y(s)|\d s+ 2C_1\de^{\frac 12},\]
and hence
\begin{equation}\label{s-3.13}
\begin{split}
  &\int_0^t\p(\La(s)\neq \La'(s))\d s\\
  &\leq \tilde K(t+2\delta)\int_0^t\E|X(s)-Y(s)|\d s +2C_1\tilde K (t+2\delta)\de^{\frac 12}+\tilde C\delta^2 t+\frac{\tilde C(t+1)t}{2}\delta.
\end{split}
\end{equation}
This yields immediately  the estimate \eqref{s-3.9} holds for some constant $C$ independent of $\delta$.
\end{proof}

\begin{thm}\label{t-3.3}
Assume  (Q1)-(Q3), \textbf{(A1)}, \textbf{(A2)}, \textbf{(H1)} and \textbf{(H2)} hold. Let $(X(t),\La(t))$ and $(Y(t),\La'(t))$ be determined by \eqref{1.1}, \eqref{1.2} and \eqref{s-3.1}, \eqref{s-3.2} respectively. Then it holds
\begin{equation}\label{s-3.3}
\E\big[\sup_{0\leq t\leq T}|X(t)-Y(t)|\big] \leq C \de^{\frac 12},
\end{equation}  for some constant $C>0$ depending on $T$ and independent of $\de$, which yields that
\begin{equation}\label{s-3.4}
\lim_{\de\ra 0} \E\big[\sup_{0\leq t\leq T}|X(t)-Y(t)|\big]=0.
\end{equation}
\end{thm}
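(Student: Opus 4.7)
The plan is to exploit the constant–diffusion hypothesis \textbf{(H1)} so that the Brownian term cancels in $X(t)-Y(t)$, reducing the problem to a Bellman–Gronwall argument driven by the drift and by the switching discrepancy, the latter already controlled in Lemma \ref{t-3.2}.

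First, I would write
\[
X(t)-Y(t)=\int_0^t\bigl[b(X(s),\La(s))-b(Y(s_\de),\La'(s_\de))\bigr]\d s,
\]
since the $\sigma\,\d W(s)$ terms cancel by \textbf{(H1)}. Then I would decompose the integrand via the telescopic chain
\[
b(X(s),\La(s))-b(Y(s_\de),\La'(s_\de))
=\sum_{k=1}^{4}\Delta_k(s),
\]
where
\[
\Delta_1=b(X(s),\La(s))-b(Y(s),\La(s)),\quad
\Delta_2=b(Y(s),\La(s))-b(Y(s),\La'(s)),
\]
\[
\Delta_3=b(Y(s),\La'(s))-b(Y(s_\de),\La'(s)),\quad
\Delta_4=b(Y(s_\de),\La'(s))-b(Y(s_\de),\La'(s_\de)).
\]
Using \textbf{(H2)} one gets $|\Delta_1|\leq C_4|X(s)-Y(s)|$ and $|\Delta_3|\leq C_4|Y(s)-Y(s_\de)|$, while \textbf{(A2)} yields $|\Delta_2|\leq 2C_1\mathbf 1_{\{\La(s)\neq\La'(s)\}}$ and $|\Delta_4|\leq 2C_1\mathbf 1_{\{\La'(s)\neq\La'(s_\de)\}}$.

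Next I would dispose of the ``easy'' error sources. Boundedness of $b$ and $\sigma$ together with the EM construction \eqref{s-3.1} gives the standard bound $\E|Y(s)-Y(s_\de)|\le 2C_1\de^{1/2}$, which is exactly estimate \eqref{s-3.6} invoked in Lemma \ref{t-3.2}. For the grid-switching term, since $(\La'(t))$ has jump rate bounded by $H$ (condition (Q2)), a single-step Poisson estimate gives $\p(\La'(s)\neq\La'(s_\de))\le 1-e^{-H\de}\le H\de$, hence $\int_0^T\p(\La'(s)\neq\La'(s_\de))\d s\le HT\de$. The remaining switching discrepancy $\int_0^t\p(\La(s)\neq\La'(s))\d s$ is precisely what Lemma \ref{t-3.2} bounds by $C\de^{1/2}+C\int_0^t\E|X(s)-Y(s)|\d s$.

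Putting everything together and taking first the supremum over $[0,u]\subset[0,T]$ (allowed because $|X(t)-Y(t)|$ is the absolute value of a deterministic-looking integral, no martingale term present) and then expectations, I obtain an inequality of the form
\[
\E\sup_{0\leq r\leq u}|X(r)-Y(r)|\leq C_T\,\de^{1/2}+C_T\!\int_0^u\!\E\sup_{0\leq r\leq s}|X(r)-Y(r)|\,\d s.
\]
Gronwall's lemma on $[0,T]$ then delivers \eqref{s-3.3}, and \eqref{s-3.4} follows by letting $\de\downarrow 0$.

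The main obstacle has already been neutralised by Lemma \ref{t-3.2}: controlling $\int_0^t\p(\La(s)\neq\La'(s))\d s$ in terms of $\int_0^t\E|X(s)-Y(s)|\d s$ plus a $\sqrt\de$-order residual is what couples the discrete and continuous components and allows the Gronwall closure. What remains is essentially bookkeeping: the careful four-term telescope above, matching $C_1$-bounded drifts to the indicator events and $C_4$-Lipschitz drifts to the pathwise differences, while verifying that no surviving term exceeds order $\sqrt\de$. Because $\Delta_2$ couples to Lemma \ref{t-3.2} and thereby produces the $\sqrt\de$ rate, the final order of convergence is $\de^{1/2}$, matching \cite{YM04} as asserted.
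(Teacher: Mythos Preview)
Your proposal is correct and follows essentially the same approach as the paper: cancel the diffusion via \textbf{(H1)}, telescope the drift difference into four pieces bounded respectively by $C_4|X(s)-Y(s)|$, $2C_1\mathbf 1_{\{\La(s)\neq\La'(s)\}}$, $C_4|Y(s)-Y(s_\de)|$, and $2C_1\mathbf 1_{\{\La'(s)\neq\La'(s_\de)\}}$, then feed in the $\de^{1/2}$ one-step bound, the $O(\de)$ grid-switching bound, and Lemma \ref{t-3.2} before closing with Gronwall. The only cosmetic difference is the order of the intermediate points in your telescope (you pass through $b(Y(s),\La'(s))$ whereas the paper passes through $b(Y(s_\de),\La(s))$), which is immaterial.
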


\begin{proof}
  Set $Z(t)=X(t)-Y(t)$ for $t\geq 0$, then $Z(0)=X(0)-Y(0)=0$  and
  \begin{align*}
    Z(t)=\int_0^t b(X(s),\La(s))-b(Y(s_\de),\La'(s_\de))\d s,\ \ t>0.
  \end{align*}
  By \textbf{(H2)}, it holds
  \begin{equation}\label{s-3.5}
  \begin{split}
    &\E\sup_{0\leq s\leq t}|Z(s)|\\
    &\leq \E\int_0^t|b(X(s),\La(s))-b(Y(s_\de),\La'(s_\de))|\d s\\
    &\leq \E\int_0^t\Big\{|b(X(s),\La(s))-b(Y(s),\La(s))|+|b(Y(s),\La(s))-b(Y(s_\de),\La(s))|\\
    &\qquad +\! |b(Y(s_\de),\La(s))\!-\!b(Y(s_\de),\La'(s))|
    \!+\!|b(Y(s_\de),\La'(s))\!-\! b(Y(s_\de),\La'(s_\de))|\Big\}\d s\\
    &\leq\E\int_0^t\Big\{ C_4\big(|Z(s)|+|Y(s)-Y(s_\de)|\big) + 2C_1\big(\mathbf 1_{\{\La(s)\neq \La'(s)\}}+\mathbf 1_{\{\La'(s)\neq \La'(s_\de)\}}\big)\Big\}\d s
  \end{split}
  \end{equation}

  By \eqref{s-3.1} and condition \textbf{(A2)}, we get
  \begin{equation}\label{s-3.6}
  \begin{split}
    \E|Y(s)-Y(s_\de)|&\leq  \E\int_{s_\de}^s |b(Y(r_\de, \La'(r_\de))|\d r  + \Big(\E\int_{s_\de}^s \|\sigma\|_{\mathrm{HS}}^2\d r\Big)^{\frac 12}\\
    &\leq C_1\de+C_1  \de^{\frac 12}\leq 2C_1\de^{\frac 12}.
  \end{split}
  \end{equation}
  For $t>0$, set $K=[t/\delta]$, $t_k=k\de$ for $k\leq K$ and $t_{K+1}=t$.
  Then, according to \eqref{s-3.2b} and (Q2),
  \begin{equation}\label{s-3.7}
  \int_0^t\E\mathbf 1_{\{\La'(s)\neq \La'(s_\de)\}}\d s =\sum_{k=0}^K\int_{t_k}^{t_{k+1}} \p(\La'(s)\neq \La'(t_k))\d s\leq H\delta t +o(\de).
  \end{equation}
 According to the Lemma \ref{t-3.2}, there exists a constant $C>0$ depending on $t$ such that
  \begin{equation}\label{s-3.8}
  \int_0^t\p (\La(s)\neq \La'(s))\d s\leq C   \de^{\frac 12}+C\int_0^t\E|Z(s)|\d s.
  \end{equation}
 Inserting \eqref{s-3.6}, \eqref{s-3.7}, \eqref{s-3.8} into \eqref{s-3.5}, we obtain
  \begin{equation*}
    \E\big[\sup_{0\leq s\leq t}|Z(t)|\big]\leq C\de^{\frac 12}+C\int_0^t\E\big[\sup_{0\leq r\leq s}|Z(r)|\big]\d s.
  \end{equation*} By Gronwall's inequality, we obtain that
  \[ \E\big[\sup_{0\leq t\leq T}|Z(t)|\big]\leq C(T) \delta^{\frac 12},\]
  which  yields the desired conclusion.
\end{proof}

\end{document}